\pgfplotsset{compat=1.5}
\pgfplotsset{width=10cm}
\pgfplotsset{height=6cm}
\definecolor{darkblue}{RGB}{0,0,160}
\newtheorem{thm}{Theorem}[section]
\newtheorem{lemma}[thm]{Lemma}
\newtheorem{cor}[thm]{Corollary}
\newtheorem{prop}[thm]{Proposition}
\theoremstyle{definition}
\newtheorem{example}[thm]{Example}
\newtheorem{remark}[thm]{Remark}
\newtheorem{defn}[thm]{Definition}
\numberwithin{equation}{section}
\newcommand{\ring}[1]{\ensuremath{\mathbb{#1}}}
\newcommand\NN{\ring{N}}
\newcommand\QQ{\ring{Q}}
\newcommand\RR{\ring{R}}
\newcommand\ZZ{\ring{Z}}
\newcommand\conv{\mathrm{conv}}
\newcommand\cB{{\mathcal B}}
\newcommand\cC{{\mathcal C}}
\newcommand\cF{{\mathcal F}}
\newcommand\cI{{\mathcal I}}
\newcommand\cM{{\mathcal M}}
\newcommand\cG{{\mathcal G}}
\newcommand\cP{{\mathcal P}}
\newcommand\cone[1]{\NN#1}
\DeclareMathOperator\rank{rank} % Annihilator
\DeclareMathOperator{\supp}{supp} % The support
\newcommand{\inD}[1][\relax]{\def\argone{#1}\def\temprelax{\relax}
  \ifx\argone\temprelax\right.\else\,\middle|#1\right.{}\fi}
\newcommand{\diam}[1]{\mathrm{diam}\left( #1 \right)}
\newcommand{\fdiam}[3]{d^{#1}_{#2,#3}}
\newcommand{\graver}[1]{\textnormal{Gr}_{#1}}
\newcommand{\relaxfiber}[2]{\mathcal{R}_{#1,#2}}
\newcommand{\boundaryDir}[3]{\partial_{#1}^{#2}(#3)}
\newcommand{\fiber}[2]{\mathcal{F}_{#1,#2}}
\newcommand{\fibergraph}[3]{\fiber{#1}{#2}(#3)}
\begin{document}

\title{Rapid mixing and Markov bases}

\author{Tobias Windisch}
\address{Otto-von-Guericke Universität\\ Magdeburg, Germany} 
\email{windisch@ovgu.de}

\date{\today}

\makeatletter
  \@namedef{subjclassname@2010}{\textup{2010} Mathematics Subject Classification}
\makeatother

\subjclass[2010]{Primary: 05C81, Secondary: 13P25}
%05C81 random walks on graphs
%13P25 applications of commutative algebra

\keywords{Random walks, fiber graphs, Markov bases, expander graphs}

\begin{abstract}
The mixing behaviour of random walks on lattice points of polytopes
using Markov bases is examined. It is shown that under a dilation of the
underlying polytope, these random walks do not mix rapidly when a
fixed Markov basis is used. We also show that this phenomenon does not
disappear after adding more moves to the Markov basis.  Avoiding
rejections by sampling applicable moves does also not lead to an
asymptotic improvement. As a way out, a method of how to adapt Markov
bases in order to achieve the fastest mixing behaviour is introduced.
\end{abstract}
\maketitle
\setcounter{tocdepth}{1}

\section{Introduction}
Random walks have been successfully used in various applications to
explore combinatorial structures where a complete enumeration is
computationally
prohibitive~\cite{Jerrum2004,Matthews1991,Diaconis1998a}. In many of
these applications, the underlying discrete objects correspond to the
elements of a \emph{fiber} $\fiber{A}{b}:=\{u\in\NN^d: Au=b\}$ of a
matrix $A\in\ZZ^{m\times d}$ with $\ker_\ZZ(A)\cap\NN^d=\{0\}$ and a
right-hand side $b\in\ZZ^m$.  The exploration of fibers with random
walks requires to connect their elements by edges so that there is a
path between any two of them. In their groundbreaking work~\cite{Diaconis1998a}, Diaconis
and Sturmfels have shown how to endow
$\fiber{A}{b}$ with the structure of a connected graph in a
computational way: For a finite set $\cM\subset\ker_\ZZ(A)$, the
\emph{fiber graph} $\fibergraph{A}{b}{\cM}$ is the graph on
$\fiber{A}{b}$ in which two nodes $u,v\in\fiber{A}{b}$ are adjacent if
$u-v\in\pm\cM$. They have coined the term \emph{Markov basis}, which
denotes a finite set $\cM\subset\ker_\ZZ(A)$ such that
$\fibergraph{A}{b}{\cM}$ is connected for all $b\in\ZZ^m$.  Their main
result shows that a Markov basis can be obtained by a Gröbner basis
computation in a polynomial ring~\cite[Theorem~3.1]{Diaconis1998a}.
Markov bases can be used to enumerate locally the neighborhood of a
node in the fiber graph, which makes them a general machinery to
approximate any probability distribution on any fiber of a matrix.
The number of steps needed to approximate a given distribution
sufficiently is the \emph{mixing time} of the random walk. 
Even though the computation of Markov bases received a lot of
attention in the last
decade~\cite{Hara2008,Kahle2014a,Develin2003,Sullivant2005,Sullivant2007},
mixing results on fiber graphs are still rare. It was shown
in~\cite{Diaconis1995} that the mixing time of random walks on two-way
contingency tables with the same row and column sums using a minimal
Markov basis is quadratic in the diameter of the underlying fiber
graph and a similar result is true for random walks on lattice points
of polytopes that use the unit vectors as Markov basis
vectors~\cite{Diaconis1996,Virag1998}.

In this paper, we study the mixing behaviour of the \emph{simple walk}
on fiber graphs, whose stationary distribution is the uniform
distribution on $\fiber{A}{b}$.  Our main result concerns the mixing
behaviour of fiber graph sequences that use a fixed Markov basis:

\begin{thm}\label{t:NotRapidlyMixing}
Let $A\in\ZZ^{m\times d}$, let $\cM\subset\ker_\ZZ(A)$ be a Markov
basis for $A$, and let $(b_i)_{i\in\NN}$ a dominated sequence in
$\cone{A}$. Then
$(\fibergraph{A}{b_i}{\cM})_{i\in\NN}$ is no
expander. If additionally $(b_i)_{i\in\NN}$ has a meaningful
parametrization, then $(\fibergraph{A}{b_i}{\cM})_{i\in\NN}$ is not
rapidly mixing.
\end{thm}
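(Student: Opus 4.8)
The plan is to exhibit, in every graph $G_i:=\fibergraph{A}{b_i}{\cM}$, a vertex set of vanishing edge expansion (obtained by cutting the fiber along a linear functional), which immediately precludes expansion, and then, via Cheeger's inequality, to convert this bottleneck into a lower bound on the relaxation time and hence on the mixing time.

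Two observations will be used uniformly in $i$. Since $\cM$ is a fixed finite set, every $G_i$ has maximum degree at most $\Delta:=2|\cM|$; and for any fixed $c\in\ZZ^d$ and any edge $\{u,v\}$ of any $G_i$ one has $u-v\in\pm\cM$, hence $|\<c,u\>-\<c,v\>|\le C:=\max_{m\in\cM}|\<c,m\>|$, so a single move changes the $c$-value by at most the constant $C$. For the geometry, $\fiber{A}{b_i}$ is the set of lattice points of $P_i:=\{x\in\RR^d_{\ge 0}:Ax=b_i\}$, and since $(b_i)$ is a dominated sequence in $\cone A$, these polytopes grow linearly in a parameter $\ell_i\to\infty$: up to lattice translations, $\ell_i Q\subseteq P_i\subseteq \ell_i Q'$ for fixed polytopes $Q\subseteq Q'$ of a common dimension $k\ge 1$. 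Consequently $N_i:=|\fiber{A}{b_i}|=\Theta(\ell_i^{\,k})$, while for any $c\in\ZZ^d$ that is non-constant on $P_i$ each level set $\{u\in\fiber{A}{b_i}:\<c,u\>=j\}$ lies in a polytope of dimension at most $k-1$ and therefore contains $O(\ell_i^{\,k-1})=o(N_i)$ points, uniformly in $j$.

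Fix such a $c$ (one exists because $k\ge 1$), let $t_i$ be a median of $\{\<c,u\>:u\in\fiber{A}{b_i}\}$, and put $S_i:=\{u\in\fiber{A}{b_i}:\<c,u\>\le t_i\}$. Then $|S_i|\ge N_i/2$, and since each level set is $o(N_i)$ its complement has at least $N_i/3$ vertices once $i$ is large. Every edge of $G_i$ leaving $S_i$ has its endpoint in $S_i$ inside the slab $\{u:t_i-C<\<c,u\>\le t_i\}$, a union of $C$ level sets; hence this slab, and thus (the degrees being at most $\Delta$) the edge boundary $\partial S_i$, has $O(\ell_i^{\,k-1})$ elements. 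For the simple walk, whose stationary distribution is uniform and whose transition matrix is symmetric, the bottleneck ratio therefore satisfies
\[
  \Phi_*(G_i)\;=\;O\!\left(\frac{|\partial S_i|}{N_i}\right)\;=\;O\!\left(\frac{\ell_i^{\,k-1}}{\ell_i^{\,k}}\right)\;=\;O(1/\ell_i)\;\longrightarrow\;0 .
\]
Since $N_i\to\infty$ and the degrees stay bounded, $(G_i)$ is not a family of expanders, which is the first assertion. For the second, the simple walk is reversible, so Cheeger's inequality yields $1-\lambda_2(G_i)\le 2\Phi_*(G_i)=O(1/\ell_i)$; hence the relaxation time, and with it $t_{\mathrm{mix}}(G_i)$, is $\Omega(\ell_i)=\Omega(N_i^{1/k})$ (one may also apply the bottleneck-ratio lower bound on mixing time directly to $S_i$). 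Under a meaningful parametrization the relevant size measure of $b_i$ is polynomially comparable to $\ell_i$, so $t_{\mathrm{mix}}(G_i)$ grows faster than every polylogarithm of the number of vertices (equivalently, faster than every power of the bit-size of $b_i$); thus $(G_i)$ is not rapidly mixing.

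The step I expect to be the main obstacle is the geometric input of the second paragraph: deducing from the definition of a dominated sequence the sandwich $\ell_i Q\subseteq P_i\subseteq\ell_i Q'$ (up to translation), and hence the estimates $N_i=\Theta(\ell_i^{\,k})$ and ``each $c$-level set is $o(N_i)$'', together with pinning down exactly what ``meaningful parametrization'' contributes on top of this. The first two are Ehrhart-type comparisons, and the level-set bound only needs $c$ to be transverse to a direction in which $P_i$ actually grows, which is guaranteed once $c$ is non-constant on $P_i$. With those in place, the bottleneck estimate, the use of Cheeger's inequality, and the mixing-time lower bound are routine.
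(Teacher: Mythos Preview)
Your overall strategy---find a subset $S_i\subset\fiber{A}{b_i}$ with $|\partial S_i|/|S_i|\to 0$ and invoke Cheeger---is the right one, and it is also the paper's strategy. The gap is in the geometric input you flag yourself: the sandwich $\ell_iQ\subseteq P_i\subseteq\ell_iQ'$ with $Q,Q'$ of a \emph{common} dimension simply does not follow from ``dominated''. The definition only gives you the lower containment (a translate of $i\cdot\relaxfiber{A}{b}$ sits inside $P_i$); nothing bounds $P_i$ from above. For a concrete failure, take $A=\begin{pmatrix}1&1&0&0\\0&0&1&1\end{pmatrix}$, $b=(1,1)^T$, $u=(1,0,1,0)^T$, and $b_i=(i,2^i)^T$; then $b_i-ib=(0,2^i-i)\in\cone{A}$ and $w_i=(0,0,2^i-i,0)$ has $\supp(w_i)\subseteq\supp(u)$, so the sequence is dominated, yet $P_i$ is a rectangle $[0,i]\times[0,2^i]$ whose aspect ratio blows up, so no single pair $Q\subseteq Q'$ can sandwich all $P_i$ with one scale $\ell_i$. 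Consequently both $N_i=\Theta(\ell_i^{\,k})$ and ``each $c$-level set has $O(\ell_i^{\,k-1})$ points'' are unjustified as stated, and since your argument for ``not rapidly mixing'' rests on $N_i=\Theta(\ell_i^{\,k})$ (that is how you get $t_{\mathrm{mix}}=\Omega(N_i^{1/k})$ and bypass the meaningful-parametrization hypothesis), the second conclusion is not established either. Note also that the support condition in the definition of ``dominated'' plays no role in your argument, which is a warning sign.

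The paper avoids all of this by choosing a different test set: not a half-space in the whole fiber, but the translated sub-fiber $S_i:=u_i+\fiber{A}{ib}\subset\fiber{A}{b_{i'}}$ (for a carefully chosen subsequence $i'$ with $2|\fiber{A}{ib}|\le|\fiber{A}{b_{i'}}|$). This set \emph{is} a dilate of a fixed polytope, so Ehrhart theory applies to it directly; the edges leaving $S_i$ are supported on points of $S_i$ within $\cC(\cM)$ of a facet $\{w_j=0\}$, and the support condition $\supp(w_i)\subseteq\supp(u)$ is exactly what guarantees that only coordinates $j\in\supp(u)$ matter and that the corresponding slices $\{w\in\fiber{A}{ib}:w_j=l\}$ are genuinely lower-dimensional (because $u_j>\cC(\cM)$ after a harmless reparametrization). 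This yields $h(\fibergraph{A}{b_{i'}}{\cM})=O(1/i)$ in the original parameter $i$, which together with $|\fiber{A}{b_i}|\le q(i)$ (the meaningful parametrization) gives the non-rapid-mixing conclusion. If you want to repair your half-space idea, the natural fix is to cut the \emph{sub-fiber} rather than the whole fiber---but then you must also bound the edges that leave the sub-fiber altogether, which brings you back to the paper's boundary analysis.
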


Surprisingly, walking randomly on a fiber with a larger Markov basis
(Remark~\ref{r:AddingMultiples}) or avoiding rejections by sampling
only applicable moves (Remark~\ref{r:OnlyApplicableMoves}) does not
improve the asymptotic mixing behaviour.
The conclusion we
draw from these results is that an adaption of the Markov basis has to
take place depending on the right-hand side $b\in\ZZ^m$. 
In Section~\ref{s:ConstructExpanders}, we adapt the Markov basis so
that the underlying graph becomes the complete graph with additional
loops. Adding more moves to a Markov basis
increases the rejection rate, i.e. the number of loops, on every node
of the fiber. Thus, it is a fine line to find the proper number of
moves to add without slowing down the random walk.  The fastest mixing
behaviour is obtained for expander graphs~\cite{Hoory2006} and we show
how to obtain expanders on fibers under mild assumptions on the
diameter (Corollary~\ref{c:LinearDiameter}). The idea of constructing
expanders on fiber is due to Alexander Engstr{\"o}m who used the zig-zag
product to obtain expanders~\cite{Engstrom2015}.  Our method is
different and yields for fixed $n\in\NN$ an expanding family for
$n\times n$ contingency tables where all row and column sums are
equal. Our adapted Markov basis can become arbitrarily large and a
priori it is not easy to draw a move from it uniformly at random. It
remains an interesting problem -- both from a combinatorial and
statistical side -- to understand the structure of the adapted Markov
basis and how one can draw from it efficiently.

\subsection*{Conventions and Notation}
The natural numbers are $\NN:=\{0,1,2,\ldots\}$. For any $n\in\NN$, we
set $[n]:=\{m\in\NN: 1\le m \le n\}$ and we use $\NN_{>
n}$ and $\NN_{\ge n}$ to denote the subsets of $\NN$ whose elements
are strictly greater and greater than $n$ respectively.
A graph is always undirected and can multiple
loops. Let $G=(V,E)$ be a graph. If there is $d\in\NN$ such that all the
nodes of $G$ are incident to $d$ edges, then $G$ is
\emph{$d$-regular}. The \emph{distance}
$d_G(v,w)$ of two nodes $v,w\in V$ is the number of edges in a
shortest path connecting $v$ and $w$. The \emph{diameter} $\diam{G}$
of $G$ is the maximal distance that appears between any pair of its
nodes. Here, it is assumed that all fiber-defining matrices $A\in\ZZ^{m\times d}$
fulfill $\ker_\ZZ(A)\cap\NN^d=\{0\}$. The affine semigroup
in $\ZZ^m$
generated by the column vectors of $A$ is denoted by $\cone{A}$. Let
$(a_i)_{i\in\NN}$ and $(b_i)_{i\in\NN}$ be two sequences in $\QQ$,
then $(a_i)_{i\in\NN}\in\mathcal{O}(b_i)_{i\in\NN}$ if there exist
$i_0\in\NN$ and $C\in\QQ_{>0}$ such that $|a_i|\le C\cdot|b_i|$ for
all $i\ge i_0$. Similarly, we define
$(a_i)_{i\in\NN}\in\Omega(b_i)_{i\in\NN}$ if there exist $i_0\in\NN$
and $C\in\QQ_{> 0}$ such that $|a_i|\ge C\cdot|b_i|$ for all $i\ge
i_0$. The sequence $(a_i)_{i\in\NN}$ is a subsequence of
$(b_i)_{i\in\NN}$ if there is a strongly increasing sequence
$(i_k)_{k\in\NN}$ in $\NN$ such that $a_{i_k}=b_k$ for all $k\in\NN$. 

\subsection*{Acknowledgements}
The author is very grateful to his supervisor Thomas Kahle for
uncountable comments and fruitful discussion and to Alexander Engström
for many hints on random walks on fiber graphs and families of
expanders. He is also thankful to Winfried Bruns for interesting
discussions about sequences in affine semigroups. He is supported
by the \href{https://www.studienstiftung.de}{German National
Academic Foundation} and
\href{https://www.ma.tum.de/TopMath/WebHomeEn}{TopMath}, a
graduate program of the
\href{https://www.elitenetzwerk.bayern.de}{Elite Network of
Bavaria}.

\section{Markov chains on fiber graphs}

Let $G=(V,E)$ be an undirected graph with $V=\{v_1,\ldots,v_n\}$. For
$v_i,v_j\in V$, let $A^G_{ij}$ be the number of edges in $E$
with endpoints $v_i$ and $v_j$, then the matrix $A^G\in\NN_{\ge
0}^{n\times n}$ is the \emph{adjacency matrix} of $G$. For $v\in V$,
let $\deg_G(v)$ be the number of edges incident to $v$ in $G$. The
\emph{simple walk} on $G$ has transition probabilities
\begin{equation*}
S^G_{ij}=\begin{cases}
\frac{A_{ij}^G}{\deg_G(v_i)},&\text{ if $\{v_i,v_j\}\in E$}\\
0,&\text{ otherwise}
\end{cases}.
\end{equation*}

The simple walk on $G$ comes along with a discrete-time Markov chain whose state
space is the node set $V$ of the graph~\cite{Boyd2004}. Let $\pi_0\in[0,1]^n$
be an initial distribution on $V$. For any $t\in\NN$, let
$\pi_t:=\pi_0\cdot(S^G)^t\in[0,1]^n$, then $\pi_t(i)$ is
the probability that the simple walk with initial distribution
$\pi_0$ is at $v_i$ at time $t$.
The Markov chain on $G$ is \emph{aperiodic} if for all $i\in[n]$,
$\gcd\{t\in\NN_{>0}: (S^G)^t_{i,i}>0\}=1$, \emph{symmetric} if $S^G$ is
symmetric, and \emph{irreducible} if for all $i,j\in[n]$ there exists
$t\in\NN$ such that $(S^G)^t_{i,j}>0$.
An aperiodic and irreducible Markov chain converges towards a unique
stationary distribution $\pi\in [0,1]^n$~\cite[Theorem~4.9]{Levin2008} and the second
largest eigenvalue modulus (SLEM) $\lambda$ of $S^G$, is a
measurement of the convergence rate~\cite[Section~3]{Hoory2006}:
$(\|\pi_t-\pi\|)_{t\in\NN}\in\mathcal{O}(\lambda^t)_{t\in\NN}$.

\begin{remark}\label{r:RegularSymmetric}
If $G$ is $d$-regular, then $S^G=\frac{1}{d}A^G$ and hence
$S^G$ is symmetric. If $G$ is also connected, then the simple walk is
irreducible and converges towards the uniform distribution
$\pi=\frac{1}{|V|}\cdot(1,\dots,1)^T\in[0,1]^{|V|}$ on $V$.
\end{remark}

The closer the second largest eigenvalue of a random walk is to $1$,
the slower the convergence to its stationary distribution. The next
definition states under which conditions we still have a polynomial
bound on the mixing time.

\begin{defn}\label{d:AsymptoticMixing}
For any $i\in\NN$, let $G_i=(V_i,E_i)$ be a graph and let $\lambda_i$ be
the second largest eigenvalue modulus of $S^{G_i}$. The sequence
$(G_i)_{i\in\NN}$ is \emph{rapidly mixing} if
there is a polynomial $p\in\QQ_{\ge 0}[t]$ such that for all $i\in\NN$,
$\lambda_i\le1-\frac{1}{p(\log|V_i|)}.$
The sequence $(G_i)_{i\in\NN}$ is an \emph{expander} if there exists
$\epsilon>0$ such that for all $i\in\NN$, $\lambda_i\le 1-\epsilon$.
\end{defn}

Expander graphs are highly demanded in computer science because of
their good mixing behaviour. Their name relates to the fact that their
\emph{edge-expansion} (Definition~\ref{d:EdgeExpansion}) can strictly
bounded away from zero (Proposition~\ref{p:EdgeExpansion}). The mixing
time of rapidly mixing Markov chains can be bounded by a polynomial in
the logarithm of the size of the state space (see
\cite[Section~2.3]{Sinclair1993} or \cite[Section~1.1.2]{Boyd2004})
and thus only logarithmically many nodes have to be visited by the
random walk to converge. The key player of this paper is the simple
walk on the following type of graph:

\begin{defn}\label{d:FiberGraph}
Let $\cF,\cM\subset\ZZ^d$ be finite sets. The \emph{fiber graph}
$\cF(\cM)$ is the graph on $\cF$ where two nodes $u,v\in\cF$ are
adjacent if $u-v\in\pm\cM$ and where every node $w\in\cF$ gets a loop
for every $m\in\pm\cM$ that satisfies $w+m\not\in\cF$.
\end{defn}

Recall that graphs can have multiple loops. The edges incident to a
node $v\in\cF$ correspond precisely to elements in $\pm\cM$ and thus
the graph $\cF(\cM)$ is $|\pm\cM|$-regular. If $0\in\cM$, then
$v-v\in\pm\cM$ and thus every node has at least one loop.  In order to
run irreducible Markov chains on fiber graphs, these graphs have to be
connected.

\begin{defn}\label{d:MarkovBasis}
Let $\cF,\cM\subset\ZZ^d$ be finite sets, then $\cM$ is a \emph{Markov
basis} for $\cF$ if the graph $\cF(\cM)$ is connected.  Let $\cI$ be
a set of indices, $d_i\in\NN$ be natural numbers and
$\cF_i\subset\ZZ^{d_i}$ and $\cM_i\subset\ZZ^{d_i}$ be finite sets for
any $i\in\cI$. A sequence $(\cM_i)_{i\in\cI}$ is a \emph{Markov basis}
for $(\cF_i)_{i\in\cI}$ if $\cM_i$ is a Markov basis for $\cF_i$ for
all $i\in\cI$. A finite set $\cM\subset\ZZ^d$ is a \emph{Markov basis}
for $(\cF_i)_{i\in\cI}$ with $\cF_i\subset\ZZ^d$ if $(\cM)_{i\in\NN}$
is a Markov basis for $(\cF_i)_{i\in\cI}$. 
\end{defn}

In many applications, $\cF\subset\ZZ^d$ is given implicit by
$\ZZ$-linear equations and inequalities and thus its complete
structure is unfeasible. In algebraic statistics for instance, $\cF$
equals $\fiber{A}{b}$ for a matrix $A\in\ZZ^{m\times d}$ and a
right-hand side $b\in\ZZ^m$~\cite{Drton2008}.  Note that our
general assumption $\ker_\ZZ(A)\cap\NN^d=\{0\}$ makes $\fiber{A}{b}$ finite
for all $b\in\cone{A}$. We thus call a set $\cM\subset\ZZ$ a
\emph{Markov basis} for $A$ if $\cM$ is a Markov basis for
$(\fiber{A}{b})_{b\in\cone{A}}$. 
If one can efficiently verify whether $v\in\ZZ^d$ is contained in
$\cF$ or not, as for $\cF=\fiber{A}{b}$, then it is possible to
explore $\cF$ with the simple walk using $\cM$ as follows: At a given
node $v\in\cF$, select uniformly an element $m\in\pm\cM$ and walk
along the edge given by $m\in\cM$, which either points to $v$ or to a
different node $v\neq v+m\in\cF$.

\begin{lemma}\label{l:ErgodicSimpleFiberWalks}
Let $\cF\subset\ZZ^d$ be a finite and non-empty set and
$\cM\subset\ZZ^d$ a Markov basis for $\cF$. The simple walk on
$\cF(\cM)$ is irreducible, aperiodic, symmetric, reversible, and its
stationary distribution is the uniform distribution on $\cF$.
\end{lemma}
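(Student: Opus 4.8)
The plan is to verify each asserted property of the simple walk directly from the definitions, using the structural features of the fiber graph $\cF(\cM)$ that were established just above the statement. The key observation to record first is that $\cF(\cM)$ is $|\pm\cM|$-regular: every node $v\in\cF$ is incident to exactly one edge for each $m\in\pm\cM$, namely either a genuine edge to $v+m\in\cF$ or a loop when $v+m\notin\cF$. With $d:=|\pm\cM|$, Remark~\ref{r:RegularSymmetric} then gives immediately that $S^{\cF(\cM)}=\tfrac1d A^{\cF(\cM)}$, so $S^{\cF(\cM)}$ is symmetric; symmetry plus the fact that row sums of a stochastic matrix equal $1$ shows that the uniform distribution $\tfrac1{|\cF|}(1,\dots,1)^T$ is stationary, and symmetry of $S^{\cF(\cM)}$ together with the uniform stationary distribution is exactly the detailed-balance condition, hence the walk is reversible.

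Next I would treat irreducibility. Since $\cM$ is a Markov basis for $\cF$, Definition~\ref{d:MarkovBasis} says $\cF(\cM)$ is connected, so for any $u,v\in\cF$ there is a walk $u=w_0,w_1,\dots,w_k=v$ along edges of $\cF(\cM)$; each step has positive transition probability (at least $1/d$), hence $(S^{\cF(\cM)})^k_{u,v}>0$, which is irreducibility. For aperiodicity it suffices to exhibit, for every node $v$, some odd-length closed walk at $v$ and some even-length one so that $\gcd$ of return times is $1$; concretely, any genuine edge $\{v,v+m\}$ traversed back and forth gives a return time of $2$, and I claim every node also has a return time of $1$ (a loop) or of some odd length. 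The cleanest route: if $0\in\cM$ every node has a loop and we are done; otherwise, since $\cF$ is finite while $\cM\subset\ker_\ZZ(A)$ generates an infinite subgroup direction, any node $v$ admits some $m\in\pm\cM$ with $v+m\notin\cF$ — indeed walking repeatedly in the direction of a fixed nonzero $m$ must eventually leave the finite set $\cF$ — so $v$ carries a loop, giving a return time of $1$; combined with the return time $2$ from any real edge (which exists because $\cF(\cM)$ is connected and $|\cF|\ge 2$, or trivially if $|\cF|=1$ where the walk is the constant chain and aperiodicity is immediate), we get $\gcd=1$.

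The main obstacle, and the only step needing genuine care, is the aperiodicity argument in the case $0\notin\cM$: one must argue that \emph{every} node, not just some node, has a loop. The nicety is that a node $v$ with no loop would satisfy $v+m\in\cF$ for all $m\in\pm\cM$, and then iterating shows $v+\sum_j \epsilon_j m_j\in\cF$ for a range of choices; since the $m_j$ range over a generating set of a nontrivial lattice while $\cF$ is finite, one reaches a contradiction by choosing a direction in which $\cF$ is bounded. I would phrase this via: pick any $0\ne m\in\cM$; the points $v, v+m, v+2m,\dots$ cannot all lie in the finite set $\cF$, so some $v+km\in\cF$ has $v+km+m\notin\cF$, producing a loop at the node $v+km$ — but to get a loop at the \emph{given} $v$ I instead note that by connectedness $v$ is joined to such a loop-bearing node, and for aperiodicity at $v$ it is enough that some closed walk through $v$ has odd length, which the detour out to the loop and back (length $2\ell+1$) supplies alongside the length-$2$ walk. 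This finishes all five properties.
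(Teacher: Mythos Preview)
Your argument is correct and matches the paper's proof in structure: regularity gives symmetry (hence the uniform stationary distribution and reversibility), connectedness gives irreducibility, and finiteness of $\cF$ forces a loop somewhere, whence aperiodicity. The paper's aperiodicity step is a touch leaner than yours: rather than rederiving via the detour argument that one aperiodic state suffices in an irreducible chain, it simply cites this standard fact and exhibits a single loop-bearing node (pick any $v\in\cF$ and $m\in\cM$, take the maximal $\mu$ with $v+\mu m\in\cF$; then $v+\mu m$ has a loop). Two small slips worth cleaning up: you invoke ``$\cM\subset\ker_\ZZ(A)$'', which is not among the lemma's hypotheses (it concerns arbitrary finite $\cF,\cM\subset\ZZ^d$); and your intermediate claim that \emph{every} node carries a loop is false in general (e.g.\ $\cF=\{-1,0,1\}\subset\ZZ$, $\cM=\{1\}$, node $0$)---you rightly abandon it, so just drop that first attempt and go straight to the one-loop-suffices line.
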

\begin{proof}
The random walk is irreducible and symmetric since $\cF(\cM)$ is connected
and $|\pm\cM|$-regular (Remark~\ref{r:RegularSymmetric}). Thus, it suffices to show that $\cF(\cM)$ has
one aperiodic state to show that all states are aperiodic. Choose
$v\in\cF$ and $m\in\cM$ arbitrarily. Since~$\cF$ is finite, let
$\mu\in\NN$ be the largest natural number such that $v+\mu
m\in\cF$. Then $m$ cannot be applied on $v+\mu m$ and thus
$v+\mu m$ has a loop.  The reversibility follows immediately and
since the transition matrix of the simple walk is symmetric, the
uniform distribution is the unique stationary distribution. 
\end{proof}

\begin{remark}\label{r:MetropolisHastings}
The \emph{Metropolis-Hastings}-methodology allows to modify the
simple walk so that it converges to any given
probability distribution on $\cF$~\cite[Section~3]{Levin2008}.
\end{remark}

\section{Expanding in fixed dimension}\label{sec:ExpandingRay}
Let $A\in\ZZ^{m\times d}$ and $\cM\subset\ker_\ZZ(A)$ be a Markov
basis for $A$. In this section, we study the mixing behaviour of
$(\fibergraph{A}{b_i}{\cM})_{i\in\NN}$  for sequences
$(b_i)_{i\in\NN}$ in $\cone{A}$ that are almost rays
(Definition~\ref{d:DominatingRays}). Roughly speaking, our strategy is
to show that the sequence of second largest eigenvalues
$(\lambda_i)_{i\in\NN}$ of this graph sequence satisfies $\lambda_i\ge
1-\frac{C}{i}$ for a constant $C\in\QQ_{> 0}$ and sufficiently many
$i\in\NN$. This leads, together with an
assumption on the growth of the fiber
(Definition~\ref{d:MeaningfulParametrization}), to a slow
mixing result (Theorem~\ref{t:NotRapidlyMixing}).  Our
proof uses the well-known connection between the second largest
eigenvalue modulus of the simple walk and the \emph{edge-expansion} of
the underlying graph (Proposition~\ref{p:EdgeExpansion}).  Thus, our
goal is to bound the edge-expansion of
$(\fibergraph{A}{b_i}{\cM})_{i\in\NN}$ from above appropriately. Here,
we use a particular property of $(b_i)_{i\in\NN}$, namely that we can
translate a smaller fiber into a larger fiber
$u+\fiber{A}{b_i}\subseteq\fiber{A}{b_j}$. It is then left to count the number
of Markov moves that leave the subset $u+\fiber{A}{b_i}$ in
$\fibergraph{A}{b_j}{\cM}$, which is done by
Lemma~\ref{l:ExpansionOfBoundary} and Ehrhart's theory. To start with,
let us make precise the properties of sequences in~$\cone{A}$ that are
crucial in the proof of Theorem~\ref{t:NotRapidlyMixing}.

\begin{defn}
Let $A\in\ZZ^{m\times d}$ and let $(b_i)_{i\in\NN}$ be a sequence
in $\cone{A}$. 
The sequence $(b_i)_{i\in\NN}$ is a \emph{ray} in $\cone{A}$ if there
is $b\in\cone{A}$ such that $(b_i)_{i\in\NN}=(i\cdot b)_{i\in\NN}$.
\end{defn}

We need the following terminology for our next definition: For
$b\in\cone{A}$, the \emph{$\QQ$-relaxation} of $\fiber{A}{b}$ is the
polytope $\relaxfiber{A}{b}:=\{x\in\QQ_{\ge 0}^d: Ax=b\}$. 

\begin{defn}\label{d:DominatingRays}
Let $A\in\ZZ^{m\times d}$. A sequence $(b_i)_{i\in\NN}$ is
\emph{dominated}\index{sequence!dominated} if there exists
$b\in\cone{A}$ with $\dim(\relaxfiber{A}{b})>0$ such that
$b_i-i\cdot b\in\cone{A}$ for all $i\in\NN$ and if there is
$u\in\fiber{A}{b}$ and $w_i\in\fiber{A}{b_i-i\cdot b}$ with
$\supp(w_i)\subseteq\supp(u)$ for all $i\in\NN$.
\end{defn}

On the one hand, being dominated is a sufficient, though technical,
condition on $(b_i)_{i\in\NN}$ that is crucial in our proof of the
asymptotic growth of the second largest eigenvalue modulus of
$(\fibergraph{A}{b_i}{\cM})_{i\in\NN}$. The prime example of a
dominated sequence the reader should have in mind is a ray in the
semigroup~$\cone{A}$:

\begin{remark}\label{r:RaysVsDominatingRays}
The ray $(i\cdot b)_{i\in\NN}$ with $b\in\cone{A}$ and
$\dim(\relaxfiber{A}{b})>0$ is dominated by $b$.
\end{remark}

Dominated sequences appear, for instance, as subsequence of sequences
whose distance to the facets of $\cone{A}$ becomes arbitrarily large.
Let $H_A(b):=\min\{\text{dist}(b,F): F\textnormal{ facet of }\cone{A}\}$,
where $\text{dist}(b,F)\in\QQ_{\ge 0}$ denotes the distance between
$b$ and $F\subseteq\cone{A}$.

\begin{prop}\label{p:DistanceToFacets}
Let $A\in\ZZ^{m\times d}$ with non-trivial kernel.
Let $(b_i)_{i\in\NN}$ be a sequence in $\cone{A}$ with
$\limsup_{i\in\NN} H_A(b_i)=\infty$, then $(b_i)_{i\in\NN}$ has a
dominated subsequence.
\end{prop}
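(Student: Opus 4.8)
The plan is to extract a subsequence along which we can identify a fixed point $b\in\cone{A}$ with $\dim(\relaxfiber{A}{b})>0$ that dominates, in the sense of Definition~\ref{d:DominatingRays}. Since $A$ has non-trivial kernel, $\ker_\ZZ(A)\cap\NN^d=\{0\}$ forces $\ker_\ZZ(A)$ to contain a nonzero vector with entries of mixed sign; equivalently, there is some $b^\star\in\cone{A}$ with $\dim(\relaxfiber{A}{b^\star})>0$. Indeed, pick any $0\neq k\in\ker_\ZZ(A)$, write $k=k^+-k^-$ with $k^+,k^-\in\NN^d$ having disjoint support, and set $b^\star:=Ak^+=Ak^-$; then $k^+\neq k^-$ both lie in $\fiber{A}{b^\star}$, so $\relaxfiber{A}{b^\star}$ has positive dimension. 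Fix such a $b^\star$ and a vertex $u:=k^+\in\fiber{A}{b^\star}$, and let $S:=\supp(u)$.

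First I would use the hypothesis $\limsup_i H_A(b_i)=\infty$ to pass to a subsequence $(b_{i_j})_{j\in\NN}$ with $H_A(b_{i_j})\ge c\cdot i_j$ eventually, for a suitable constant $c>0$ — this is where the unboundedness is upgraded to a linear lower bound after reindexing, exactly as in the definition of a subsequence given in the Conventions. The point of controlling the distance to every facet is the following geometric claim: if $b\in\cone{A}$ lies at distance at least $t\cdot\|b^\star\|$ (in a fixed norm) from all facets of the cone $\cone{A}\otimes\QQ$, then $b-t\cdot b^\star\in\cone{A}$, since subtracting a bounded multiple of $b^\star$ moves $b$ by a bounded amount and hence cannot cross any facet. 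Applying this along the subsequence with $t=i_j$ (after absorbing $\|b^\star\|$ and the implied constants into $c$ and thinning once more) gives $b_{i_j}-i_j\cdot b^\star\in\cone{A}$ for all $j$; reindexing the subsequence as $(b_j')_{j\in\NN}$, we obtain $b_j'-j\cdot b^\star\in\cone{A}$ for all $j$.

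It remains to produce the witnesses $w_j\in\fiber{A}{b_j'-j\cdot b^\star}$ with $\supp(w_j)\subseteq S=\supp(u)$. This is the step I expect to be the main obstacle, because membership of $b_j'-j\cdot b^\star$ in $\cone{A}$ only guarantees \emph{some} nonnegative integer preimage, not one supported on $S$. To handle it, I would strengthen the subsequence extraction: rather than merely pushing $b_j'$ away from all facets of $\cone{A}$, push it into the relative interior of the subcone $\cone{A_S}$ generated by the columns of $A$ indexed by $S$ — note $b^\star=A_S u$ already lies in that subcone, and $\dim(\relaxfiber{A}{b^\star})>0$ ensures $\cone{A_S}$ is genuinely "thick" in the relevant sense. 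Concretely, one chooses the dominating point not as $b^\star$ itself but as $b:=N\cdot b^\star$ for a large fixed integer $N$ (still with $\dim(\relaxfiber{A}{b})>0$ and dominated by the scaled vertex $Nu$), so that the slack $b_j'-j\cdot b$ has room to be realized inside $\cone{A_S}$ by an integer vector; the existence of such integer points follows once the slack lies sufficiently deep inside $\cone{A_S}$, which is arranged by yet another application of the $\limsup$ hypothesis and passing to a further subsequence. Since a subsequence of a subsequence is again a subsequence, the resulting sequence meets every clause of Definition~\ref{d:DominatingRays}, and Remark~\ref{r:RaysVsDominatingRays} is the degenerate case $b_j'=j\cdot b$. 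The delicate accounting is entirely in making the three successive thinnings compatible and in the lattice-point argument inside the subcone $\cone{A_S}$; everything else is bookkeeping with norms and constants.
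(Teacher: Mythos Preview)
Your reduction to showing $b_{i_j}-j\cdot b^\star\in\cone{A}$ along a subsequence is essentially the same geometric step the paper uses, and it is fine (modulo the slip ``$c\cdot i_j$'' where you mean ``$c\cdot j$'' after reindexing). The real problem is exactly where you flag it: the support condition $\supp(w_j)\subseteq\supp(u)$. Your proposed repair does not close this gap. You suggest pushing the slack $b_j'-j\cdot b$ into the subcone $\cone{A_S}$ with $S=\supp(k^+)$, but the sequence $(b_i)$ is \emph{given}, and the hypothesis controls only the distance to the facets of $\cone{A}$, not of $\cone{A_S}$. If some column $a_\ell$ with $\ell\notin S$ is genuinely needed to represent $b_i$ (e.g.\ take $A=(a_1,a_2,a_3)$ with $a_1=a_2$ and $a_3$ independent, $k=(1,-1,0)$, so $S=\{1\}$, and let $b_i$ involve $a_3$), then no amount of scaling $b^\star$ or thinning the subsequence will place $b_j'-j\cdot b$ inside $\cone{A_S}$; the ``lattice-point argument inside the subcone'' simply has no points to find.

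The paper sidesteps this entirely by a different choice of dominating element: instead of $b^\star=Ak^+$ coming from a single kernel vector, it takes $c:=a_1+\cdots+a_d$, the sum of all columns. Then $u:=(1,\dots,1)\in\fiber{A}{c}$ has full support $\supp(u)=[d]$, so \emph{any} $w_j\in\fiber{A}{b_{i_j}-j\cdot c}$ automatically satisfies $\supp(w_j)\subseteq\supp(u)$, and the obstacle disappears. The remaining checks are immediate: $\dim(\relaxfiber{A}{c})=\dim(\ker_\ZZ(A))>0$ by the non-trivial-kernel hypothesis, and for each $k$ the set $\cone{A}\setminus(k\cdot c+\cone{A})$ lies within bounded distance of the facets, so $H_A(b)$ large forces $b\in k\cdot c+\cone{A}$. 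Your argument becomes correct the moment you replace $b^\star$ and $u$ by $c$ and $\mathbf{1}$.
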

\begin{proof}
Let $a_1,\dots,a_d\in\ZZ^m$ be the columns of $A$ and let
$c:=a_1+\cdots+a_d$.
First, we show the following: For every $k\in\NN$ there exists
$m_k\in\NN$, such that any $b\in\cone{A}$ with $H_A(b)\ge m_k$
is contained in $k\cdot c+\cone{A}$. The set
$\cone{A}\setminus(k\cdot c+\cone{A})$ is contained in finitely many
hyperplanes parallel to the facets of $\cone{A}$. Hence, choosing
$m_k\in\NN$ large enough, every $b\in\cone{A}$ with
$H_A(b)\ge m_k$ cannot be in $\cone{A}\setminus(k\cdot c+\cone{A})$.
The statement of the lemma follows immediately because
$\limsup_{i\in\NN} H_A(b_i)=\infty$ implies that there is
$i_k\in\NN$ such that $H_A(b_{i_k})\ge m_k$. Hence, for all
$k\in\NN$, $b_{i_k}\in k\cdot c+\cone{A}$. In particular,
$(b_{i_k})_{k\in\NN}$ is dominated by $c$ since
$\fiber{A}{c}$ has an
element with full support and
since $\dim(\relaxfiber{A}{c})=\dim(\ker_\ZZ(A))>0$.
\end{proof}

\begin{remark}\label{r:ReverseOfFacetDistance}
The reverse of Proposition~\ref{p:DistanceToFacets} is not true. For
instance, take
the  matrix
$$A=\begin{bmatrix} 1 & 1 & 0 \\ 0 & 0 & 1\end{bmatrix}$$
and 
$b=(2,0)^T\in\ZZ^2$. The ray
$(i\cdot b)_{i\in\NN}$ is dominated since $\dim(\relaxfiber{A}{b})>0$. However,
since $\{i\cdot b: i\in\NN\}$ is contained in a facet of $\cone{A}$,
$H_A(i\cdot b)=0$
for all $i\in\NN$.
\end{remark}

To put hands on the second largest eigenvalue of the simple walk, we
use the following connecting piece between statistics and
combinatorics.

\begin{defn}\label{d:EdgeExpansion}
Let $G=(V,E)$ be a graph and $S\subseteq V$.  Then $E_G(S)\subseteq
E$ denotes the set of all edges with endpoints in $S$ and $V\setminus S$.
The \emph{edge-expansion} of $G$ is
\begin{equation*}
h(G):=\min\left\{\frac{|E_G(S)|}{|S|}: S\subset
V,0<2|S|\le|V|\right\}.
\end{equation*}
\end{defn}

\begin{remark}\label{r:CheegerConstant}
The invariant $h(G)$ has many names in the literature, like
\emph{Cheeger constant}~\cite{Cheeger1969} or \emph{isoperimetric
number}~\cite{Mohar1989}.  Also, the \emph{conductance} $\Phi$ of the
simple walk on a $d$-regular graph $G$ fulfills $\Phi\cdot
d=h(G)$~\cite{sinclair1989}.
\end{remark}

\begin{prop}\label{p:EdgeExpansion}
Let $G=(V,E)$ be a connected and $d$-regular graph 
and let $\lambda$ be the second largest eigenvalue modulus of $S^G$, then
$\lambda\ge 1- \frac{2}{d}\cdot h(G)$.
\end{prop}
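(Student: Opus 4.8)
The plan is to relate the second largest eigenvalue modulus $\lambda$ of the simple walk $S^G$ to the edge-expansion $h(G)$ via the variational characterization of eigenvalues of the Laplacian. Since $G$ is $d$-regular, we have $S^G = \frac{1}{d}A^G$ by Remark~\ref{r:RegularSymmetric}, so $S^G$ is symmetric and its eigenvalues are real, lying in $[-1,1]$. Write them as $1 = \mu_1 \ge \mu_2 \ge \cdots \ge \mu_n \ge -1$. Connectedness gives $\mu_1 = 1$ with multiplicity one. The SLEM is $\lambda = \max\{\mu_2, |\mu_n|\} \ge \mu_2$, so it suffices to prove $\mu_2 \ge 1 - \frac{2}{d}h(G)$, i.e. that the spectral gap $1 - \mu_2$ is at most $\frac{2}{d}h(G)$.

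First I would recall the Rayleigh quotient characterization: with $L = I - S^G = \frac{1}{d}(dI - A^G)$ the normalized Laplacian (here all degrees equal $d$), one has
\begin{equation*}
1 - \mu_2 = \min\left\{ \frac{\langle Lx, x\rangle}{\langle x, x\rangle} : x \perp \mathbf{1},\ x \ne 0 \right\} = \min\left\{ \frac{\sum_{\{u,v\}\in E}(x_u - x_v)^2}{d \sum_{v} x_v^2} : x \perp \mathbf{1},\ x \ne 0\right\},
\end{equation*}
where loops contribute nothing to the numerator. To get an upper bound on this minimum I would plug in a well-chosen test vector built from a set $S \subseteq V$ achieving (or nearly achieving) the minimum in the definition of $h(G)$, so that $0 < 2|S| \le |V|$. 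Taking $x$ supported appropriately on $S$ — for instance $x_v = 1/|S|$ for $v\in S$ and $x_v = -1/(|V|-|S|)$ for $v\notin S$, which is automatically orthogonal to $\mathbf 1$ — the numerator becomes $|E_G(S)|\big(\frac{1}{|S|}+\frac{1}{|V|-|S|}\big)^2$ and the denominator $d\big(\frac{1}{|S|}+\frac{1}{|V|-|S|}\big)$, giving $1 - \mu_2 \le \frac{|E_G(S)|}{d}\big(\frac{1}{|S|}+\frac{1}{|V|-|S|}\big) \le \frac{|E_G(S)|}{d}\cdot\frac{2}{|S|}$, where the last step uses $|V|-|S| \ge |S|$. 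Minimizing over admissible $S$ yields $1 - \mu_2 \le \frac{2}{d}h(G)$, hence $\lambda \ge \mu_2 \ge 1 - \frac{2}{d}h(G)$, as claimed.

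The only real subtlety is the bookkeeping with loops: the fiber graphs of Definition~\ref{d:FiberGraph} carry multiple loops, and one must confirm that loops leave $\langle Lx,x\rangle$ unchanged (a loop at $v$ contributes $(x_v - x_v)^2 = 0$ to the edge sum) while they do contribute to the degree $d$ — which is exactly why the normalization is by $d = |\pm\cM|$ rather than by the non-loop degree. This is precisely consistent with $S^G = \frac1d A^G$ from Remark~\ref{r:RegularSymmetric}. I expect no genuine obstacle here; the result is the easy (lower-bound-on-$\lambda$) direction of the discrete Cheeger inequality, and the main care is simply to state the test-vector computation cleanly and to invoke $2|S|\le|V|$ at the right moment. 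Alternatively, one can cite a standard reference (e.g.~\cite{sinclair1989, Hoory2006}) for the conductance form $\Phi\cdot d = h(G)$ together with the bound $\lambda \ge 1 - 2\Phi$, but I would prefer to give the short self-contained Rayleigh-quotient argument above.
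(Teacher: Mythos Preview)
Your argument is correct: the test-vector computation is clean, the orthogonality to $\mathbf{1}$ is genuine, the loop bookkeeping is handled properly, and the final inequality $1-\mu_2\le\frac{2}{d}h(G)$ together with $\lambda\ge\mu_2$ gives exactly the claim. The paper itself does not give a proof at all---it simply cites \cite[Theorem~4.11]{Hoory2006}---so your self-contained Rayleigh-quotient argument is strictly more than what the paper provides, though it is of course the same standard ``easy direction'' of the discrete Cheeger inequality that the cited reference contains.
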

\begin{proof}
This is \cite[Theorem 4.11]{Hoory2006}.
\end{proof}

\begin{example}\label{ex:NoExpander}
For any $d\in\NN$, let $A_d=(1,\ldots,1)\in\ZZ^{1\times d}$ and let
$e_k\in\ZZ^d$ be the $k$-th unit vector of $\ZZ^d$, then the set
$\cM_d:=\{e_1-e_k: 2\le k\le d\}$ is a Markov basis for $A_d$. The
graph $\fibergraph{A_2}{i}{\cM_2}$ is isomorphic to the path graph on
$[i+1]$ for any $i\in\NN$ and hence its edge-expansion is
$\frac{2}{i+1}$ if $i$ is odd and $\frac{2}{i}$ when $i$ is
even~\cite[Section~2]{Mohar1989}. Since $|\pm\cM_2|=2$, the second largest
eigenvalue modulus $\lambda_i$ of the simple walk on
$\fibergraph{A_2}{i}{\cM_2}$ satisfies $\lambda_i\ge 1-\frac{1}{i}$
by Proposition~\ref{p:EdgeExpansion}. Hence, the
sequence $(\fibergraph{A_2}{i}{\cM_2})_{i\in\NN}$ is neither an
expander and because of $\log|\fiber{A_2}{i}|=\log(i+1)$ nor rapidly
mixing.
\end{example}

The edge-expansion of a graph can be bounded from above by dividing
the number of edges leaving a fixed subset by the size of this
particular subset. For certain subsets of fibers, it is possible to
give a description of the nodes that are incident to edges which leave this
set. Intuitively, those nodes lie on the \emph{boundary} of this set.

\begin{defn}\label{d:Boundary}
Let $A\in\ZZ^{m\times d}$, $b\in\cone{A}$, and
$\cM\subset\ker_\ZZ(A)$. For $u\in\NN^d$, the \emph{$u$-boundary of
$\fiber{A}{b}$} is
$\boundaryDir{\cM}{u}{\fiber{A}{b}}:=\{v\in u+\fiber{A}{b}: \exists m\in\pm\cM:
v+m\in\NN^d\setminus(u+\fiber{A}{b})\}$.
\end{defn}

\begin{figure}[htbp]
\begin{minipage}{0.33\textwidth}
\centering
\begin{tikzpicture}[xscale=0.5,yscale=0.5]

\fill[gray!20!white] (0,0) -- (1.5,3) -- (3,0);

\foreach \number in {0,1,2,3,4,5,6}{
\node[draw,circle,fill=black,inner sep=0pt,minimum size=4pt] () at (\number,0) {};
}

\foreach \number in {0,1,2,3,4,5}{
\node[draw,circle,fill=black,inner sep=0pt,minimum size=4pt] () at (0.5+\number,1) {};
}

\foreach \number in {0,1,2,3,4}{
\node[draw,circle,fill=black,inner sep=0pt,minimum size=4pt] () at (1+\number,2) {};
}

\foreach \number in {0,1,2,3}{
\node[draw,circle,fill=black,inner sep=0pt,minimum size=4pt] () at (1.5+\number,3) {};
}

\foreach \number in {0,1,2}{
\node[draw,circle,fill=black,inner sep=0pt,minimum size=4pt] () at (2+\number,4) {};
}

\foreach \number in {0,1}{
\node[draw,circle,fill=black,inner sep=0pt,minimum size=4pt] () at (2.5+\number,5) {};
}

\foreach \number in {0}{
\node[draw,circle,fill=black,inner sep=0pt,minimum size=4pt] () at (3+\number,6) {};
}

\draw[dotted,thick] (0,0)  -- (6,0) node () {};
\draw[dotted,thick] (0.5,1)  -- (5.5,1) node () {};
\draw[dotted,thick] (1,2)  -- (5,2) node () {};
\draw[dotted,thick] (1.5,3)  -- (4.5,3) node () {};
\draw[dotted,thick] (2,4)  -- (4,4) node () {};
\draw[dotted,thick] (2.5,5)  -- (3.5,5) node () {};

\draw[dotted,thick] (0,0)  -- (3,6) node () {};
\draw[dotted,thick] (1,0)  -- (3.5,5) node () {};
\draw[dotted,thick] (2,0)  -- (4,4) node () {};
\draw[dotted,thick] (3,0)  -- (4.5,3) node () {};
\draw[dotted,thick] (4,0)  -- (5,2) node () {};
\draw[dotted,thick] (5,0)  -- (5.5,1) node () {};

\draw[thick] (3,0)  -- (4,0) node () {};
\draw[thick] (3,0)  -- (3.5,1) node () {};
\node[draw,circle,fill=white,inner sep=0pt,minimum size=4pt] () at (3,0) {};

\draw[thick] (2.5,1)  -- (3.5,1) node () {};
\draw[thick] (2.5,1)  -- (3,2) node () {};
\node[draw,circle,fill=white,inner sep=0pt,minimum size=4pt] () at (2.5,1) {};

\draw[thick] (2,2)  -- (3,2) node () {};
\draw[thick] (2,2)  -- (2.5,3) node () {};
\node[draw,circle,fill=white,inner sep=0pt,minimum size=4pt] () at (2,2) {};

\draw[thick] (1.5,3)  -- (2.5,3) node () {};
\draw[thick] (1.5,3)  -- (2,4) node () {};
\node[draw,circle,fill=white,inner sep=0pt,minimum size=4pt] () at (1.5,3) {};

\end{tikzpicture}
\end{minipage}\hfill
\begin{minipage}{0.33\textwidth}
\centering
\begin{tikzpicture}[xscale=0.5,yscale=0.5]

\fill[gray!20!white] (0,0) -- (1.5,3) -- (3,0);

\foreach \number in {0,1,2,3,4,5,6}{
\node[draw,circle,fill=black,inner sep=0pt,minimum size=4pt] () at (\number,0) {};
}

\foreach \number in {0,1,2,3,4,5}{
\node[draw,circle,fill=black,inner sep=0pt,minimum size=4pt] () at (0.5+\number,1) {};
}

\foreach \number in {0,1,2,3,4}{
\node[draw,circle,fill=black,inner sep=0pt,minimum size=4pt] () at (1+\number,2) {};
}

\foreach \number in {0,1,2,3}{
\node[draw,circle,fill=black,inner sep=0pt,minimum size=4pt] () at (1.5+\number,3) {};
}

\foreach \number in {0,1,2}{
\node[draw,circle,fill=black,inner sep=0pt,minimum size=4pt] () at (2+\number,4) {};
}

\foreach \number in {0,1}{
\node[draw,circle,fill=black,inner sep=0pt,minimum size=4pt] () at (2.5+\number,5) {};
}

\foreach \number in {0}{
\node[draw,circle,fill=black,inner sep=0pt,minimum size=4pt] () at (3+\number,6) {};
}

\draw[dotted,thick] (0,0)  -- (6,0) node () {};
\draw[dotted,thick] (0.5,1)  -- (5.5,1) node () {};
\draw[dotted,thick] (1,2)  -- (5,2) node () {};
\draw[dotted,thick] (1.5,3)  -- (4.5,3) node () {};
\draw[dotted,thick] (2,4)  -- (4,4) node () {};
\draw[dotted,thick] (2.5,5)  -- (3.5,5) node () {};

\draw[dotted,thick] (0,0)  -- (3,6) node () {};
\draw[dotted,thick] (1,0)  -- (3.5,5) node () {};
\draw[dotted,thick] (2,0)  -- (4,4) node () {};
\draw[dotted,thick] (3,0)  -- (4.5,3) node () {};
\draw[dotted,thick] (4,0)  -- (5,2) node () {};
\draw[dotted,thick] (5,0)  -- (5.5,1) node () {};

\draw[thick] (3,0)  -- (4,0) node () {};
\draw[thick] (3,0)  -- (3.5,1) node () {};
\node[draw,circle,fill=white,inner sep=0pt,minimum size=4pt] () at (3,0) {};

\draw[thick] (2.5,1)  -- (3.5,1) node () {};
\draw[thick] (2.5,1)  -- (3,2) node () {};
\node[draw,circle,fill=white,inner sep=0pt,minimum size=4pt] () at (2.5,1) {};

\draw[thick] (2,2)  -- (3,2) node () {};
\draw[thick] (2,2)  -- (2.5,3) node () {};
\node[draw,circle,fill=white,inner sep=0pt,minimum size=4pt] () at (2,2) {};

\draw[thick] (1.5,3)  -- (2.5,3) node () {};
\draw[thick] (1.5,3)  -- (2,4) node () {};
\node[draw,circle,fill=white,inner sep=0pt,minimum size=4pt] () at (1.5,3) {};

\draw[thick] (2,0)  to [bend right=30] (4,0) node () {};
\draw[thick] (2,0)  to [bend left=30] (3,2) node () {};
\node[draw,circle,fill=white,inner sep=0pt,minimum size=4pt] () at (2,0) {};

\draw[thick] (1.5,1)  to [bend right=30] (3.5,1) node () {};
\draw[thick] (1.5,1)  to [bend left=30] (2.5,3) node () {};
\node[draw,circle,fill=white,inner sep=0pt,minimum size=4pt] () at (1.5,1) {};

\draw[thick] (1,2)  to [bend right=30] (3,2) node () {};
\draw[thick] (1,2)  to [bend left=30] (2,4) node () {};
\node[draw,circle,fill=white,inner sep=0pt,minimum size=4pt] () at (1,2) {};

\end{tikzpicture}
\end{minipage}\hfill
\begin{minipage}{0.33\textwidth}
\centering
\begin{tikzpicture}[xscale=0.5,yscale=0.5]

\fill[gray!20!white] (1.5,1) -- (4.5,1) -- (3,4);

\foreach \number in {0,1,2,3,4,5,6}{
\node[draw,circle,fill=black,inner sep=0pt,minimum size=4pt] () at (\number,0) {};
}

\foreach \number in {0,1,2,3,4,5}{
\node[draw,circle,fill=black,inner sep=0pt,minimum size=4pt] () at (0.5+\number,1) {};
}

\foreach \number in {0,1,2,3,4}{
\node[draw,circle,fill=black,inner sep=0pt,minimum size=4pt] () at (1+\number,2) {};
}

\foreach \number in {0,1,2,3}{
\node[draw,circle,fill=black,inner sep=0pt,minimum size=4pt] () at (1.5+\number,3) {};
}

\foreach \number in {0,1,2}{
\node[draw,circle,fill=black,inner sep=0pt,minimum size=4pt] () at (2+\number,4) {};
}

\foreach \number in {0,1}{
\node[draw,circle,fill=black,inner sep=0pt,minimum size=4pt] () at (2.5+\number,5) {};
}

\foreach \number in {0}{
\node[draw,circle,fill=black,inner sep=0pt,minimum size=4pt] () at (3+\number,6) {};
}

\draw[dotted,thick] (0,0)  -- (6,0) node () {};
\draw[dotted,thick] (0.5,1)  -- (5.5,1) node () {};
\draw[dotted,thick] (1,2)  -- (5,2) node () {};
\draw[dotted,thick] (1.5,3)  -- (4.5,3) node () {};
\draw[dotted,thick] (2,4)  -- (4,4) node () {};
\draw[dotted,thick] (2.5,5)  -- (3.5,5) node () {};

\draw[dotted,thick] (0,0)  -- (3,6) node () {};
\draw[dotted,thick] (1,0)  -- (3.5,5) node () {};
\draw[dotted,thick] (2,0)  -- (4,4) node () {};
\draw[dotted,thick] (3,0)  -- (4.5,3) node () {};
\draw[dotted,thick] (4,0)  -- (5,2) node () {};
\draw[dotted,thick] (5,0)  -- (5.5,1) node () {};

\draw[thick] (3,4)  -- (4,4) node () {};
\draw[thick] (3,4)  -- (2,4) node () {};
\draw[thick] (3,4)  -- (3.5,5) node () {};
\node[draw,circle,fill=white,inner sep=0pt,minimum size=4pt] () at (3,4) {};

\draw[thick] (2.5,3)  -- (1.5,3) node () {};
\node[draw,circle,fill=white,inner sep=0pt,minimum size=4pt] () at (2.5,3) {};

\draw[thick] (2,2)  -- (1,2) node () {};
\node[draw,circle,fill=white,inner sep=0pt,minimum size=4pt] () at (2,2) {};

\draw[thick] (1.5,1)  -- (0.5,1) node () {};
\draw[thick] (1.5,1)  -- (1,0) node () {};
\node[draw,circle,fill=white,inner sep=0pt,minimum size=4pt] () at (1.5,1) {};

\draw[thick] (2.5,1)  -- (2,0) node () {};
\node[draw,circle,fill=white,inner sep=0pt,minimum size=4pt] () at (2.5,1) {};

\draw[thick] (3.5,1)  -- (3,0) node () {};
\node[draw,circle,fill=white,inner sep=0pt,minimum size=4pt] () at (3.5,1) {};

\draw[thick] (4.5,1)  -- (4,0) node () {};
\draw[thick] (4.5,1)  -- (5.5,1) node () {};
\draw[thick] (4.5,1)  -- (5,2) node () {};
\node[draw,circle,fill=white,inner sep=0pt,minimum size=4pt] () at (4.5,1) {};

\draw[thick] (4,2)  -- (5,2) node () {};
\draw[thick] (4,2)  -- (4.5,3) node () {};
\node[draw,circle,fill=white,inner sep=0pt,minimum size=4pt] () at (4,2) {};

\draw[thick] (3.5,3)  -- (4.5,3) node () {};
\draw[thick] (3.5,3)  -- (4,4) node () {};
\node[draw,circle,fill=white,inner sep=0pt,minimum size=4pt] () at (3.5,3) {};

\end{tikzpicture}
\end{minipage}\hfill
\caption{Let $A_3$ be as in Example~\ref{ex:NoExpander}. The white points represent the nodes of the sets
$\boundaryDir{\cM_3}{(3,0,0)^T}{\fiber{A_3}{3}}$,
$\boundaryDir{\cM_3\cup 2\cdot\cM_3}{(3,0,0)^T}{\fiber{A_3}{3}}$, and
$\boundaryDir{\cM_3}{(1,1,1)^T}{\fiber{A_3}{3}}$ in
$\fiber{A_3}{6}$ (from left to right).}\label{f:Boundary}
\end{figure}

Figure~\ref{f:Boundary} justifies in a way that
$\boundaryDir{\cM}{u}{\fiber{A}{b}}$ can indeed be regarded as a
boundary. With this, the number of outgoing edges in a translated fiber
$u+\fiber{A}{b}$ within a larger fiber can be bounded from above:

\begin{lemma}\label{l:ExpansionOfBoundary}
Let $A\in\ZZ^{m\times d}$ and $b,b'\in\cone{A}$ with
$2|\fiber{A}{b}|\le|\fiber{A}{b'+b}|$.  Then for any finite set
$\cM\subset\ker_\ZZ(A)$ and all $u\in\fiber{A}{b'}$,
\begin{equation*}
h(\fibergraph{A}{b'+b}{\cM})\le\frac{2|\cM|\cdot|\boundaryDir{\cM}{u}{\fiber{A}{b}}|}{|\fiber{A}{b}|}.
\end{equation*}
\end{lemma}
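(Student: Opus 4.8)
The plan is to bound $h(\fibergraph{A}{b'+b}{\cM})$ from above by plugging one carefully chosen set into Definition~\ref{d:EdgeExpansion}, namely the translated fiber $S:=u+\fiber{A}{b}$ viewed as a subset of the node set of $\fibergraph{A}{b'+b}{\cM}$, and then to estimate the number of edges leaving $S$ using the $u$-boundary.

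First I would verify that $S$ is admissible in the definition of $h$. Write $G:=\fibergraph{A}{b'+b}{\cM}$ with node set $V:=\fiber{A}{b'+b}$. For every $v\in\fiber{A}{b}$ we have $A(u+v)=Au+Av=b'+b$ and $u+v\in\NN^d$, so $S\subseteq V$; since translation by $u$ is injective, $|S|=|\fiber{A}{b}|$. Because $b\in\cone{A}$, the fiber $\fiber{A}{b}$ is non-empty (it contains the coefficient vector of a representation of $b$ by the columns of $A$), hence $|S|>0$, and the hypothesis $2|\fiber{A}{b}|\le|\fiber{A}{b'+b}|$ gives $2|S|\le|V|$. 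Thus $S$ is a valid competitor and $h(G)\le |E_G(S)|/|S| = |E_G(S)|/|\fiber{A}{b}|$, so it remains to prove $|E_G(S)|\le 2|\cM|\cdot|\boundaryDir{\cM}{u}{\fiber{A}{b}}|$.

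For the core estimate, take any edge of $G$ with one endpoint $p\in S$ and the other $q\in V\setminus S$. By the construction of the fiber graph we may write $q=p+m$ for some $m\in\pm\cM$, and since $q\in\fiber{A}{b'+b}\subseteq\NN^d$ while $q\notin u+\fiber{A}{b}$, we get $q\in\NN^d\setminus(u+\fiber{A}{b})$; by Definition~\ref{d:Boundary} this forces $p\in\boundaryDir{\cM}{u}{\fiber{A}{b}}$. Hence every edge counted by $|E_G(S)|$ is incident to a node of the $u$-boundary. As $G$ is $|\pm\cM|$-regular and $|\pm\cM|\le 2|\cM|$, each such node is incident to at most $2|\cM|$ edges, and loops contribute nothing to $E_G(S)$ since their two endpoints coincide; summing over the boundary nodes yields the claimed bound. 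Combining it with the inequality from the previous step proves the lemma.

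I expect the only delicate point to be the bookkeeping in the last step: one must make sure each edge of $E_G(S)$ has exactly one endpoint in $S$ so that it is counted once in the degree sum over boundary nodes, that loops and possible parallel edges between a node and one of its neighbours do not distort the count, and that the regularity degree $|\pm\cM|$ is correctly replaced by the cruder bound $2|\cM|$ (it can be strictly smaller, e.g.\ when $\cM$ contains a vector together with its negative or the zero vector). Everything else is a direct unwinding of the definitions of the fiber graph, the $u$-boundary, and the edge-expansion.
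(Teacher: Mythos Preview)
Your proposal is correct and follows exactly the same approach as the paper: take $S=u+\fiber{A}{b}$ as the test set in Definition~\ref{d:EdgeExpansion}, observe that every edge leaving $S$ has its $S$-endpoint in $\boundaryDir{\cM}{u}{\fiber{A}{b}}$, and bound the number of such edges by $|\pm\cM|\le 2|\cM|$ times the size of the boundary. Your write-up is in fact a bit more careful than the paper's (checking $S\subseteq V$, non-emptiness, and the loop/multiplicity bookkeeping), but there is no substantive difference in strategy.
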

\begin{proof}
By assumption, $u+\fiber{A}{b}\subset\fiber{A}{b'+b}$ and since
$2|u+\fiber{A}{b}|=2|\fiber{A}{b}|\le|\fiber{A}{b'+b}|$,
\begin{equation*}
h(\fibergraph{A}{b'+b}{\cM})\le
\frac{|E_{\fibergraph{A}{b'+b}{\cM}}(u+\fiber{A}{b})|}{|u+\fiber{A}{b}|}.
\end{equation*}
The edges leaving the set $u+\fiber{A}{b}$ in
$\fibergraph{A}{b'+b}{\cM}\subset\NN^d$ are precisely
those with endpoints in
$\boundaryDir{\cM}{u}{\fiber{A}{b}}$.
Every node of $\fibergraph{A}{b'+b}{\cM}$ has at most $|\pm\cM|$ incident
edges and hence $|E_{\fibergraph{A}{b'+b}{\cM}}(u+\fiber{A}{b})|$ is bounded from above by
$2|\cM|\cdot|\boundaryDir{\cM}{u}{\fiber{A}{b}}|$. 
\end{proof}

The size of the entries in a Markov basis is crucial to determine the
size of the boundary. The larger those entries are, the more nodes
are in the boundary (Lemma~\ref{l:BoundaryMultipleMoves}) since more
nodes in the shifted fiber $u+\fiber{A}{b}$ are adjacent to
nodes outside of $u+\fiber{A}{b}$. The next definition should not
be mixed up with the \emph{Markov complexity}~\cite{Santos2003}.

\begin{defn}
The \emph{complexity} of a finite set $\cM\subset\ZZ^d$ is 
$\cC(\cM):=\max_{m\in\cM}\|m\|_{\infty}$.
\end{defn}

\begin{lemma}\label{l:BoundaryMultipleMoves}
Let $A\in\ZZ^{m\times d}$, let 
$\cM\subset\ker_\ZZ(A)$ be a finite set, and let $b\in\cone{A}$. 
Then for all $u\in\NN^d$,
\begin{equation*}
\boundaryDir{\cM}{u}{\fiber{A}{b}}\subseteq
u+\bigcup_{j\in\supp(u)}\bigcup_{r=0}^{\cC(\cM)}\{w\in\fiber{A}{b}:w_j=r\}.
\end{equation*}
\end{lemma}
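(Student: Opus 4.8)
The plan is to unwind the definition of the $u$-boundary directly. Fix $u\in\NN^d$ and take an arbitrary $v\in\boundaryDir{\cM}{u}{\fiber{A}{b}}$. By definition $v$ lies in $u+\fiber{A}{b}$, so I write $v=u+w$ with $w\in\fiber{A}{b}$; moreover there is some $m\in\pm\cM$ with $v+m\in\NN^d$ but $v+m\notin u+\fiber{A}{b}$. The goal is to exhibit an index $j\in\supp(u)$ and an $r\in\{0,\dots,\cC(\cM)\}$ with $w_j=r$, which places $v$ in the asserted union.

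First I would observe that $m\in\ker_\ZZ(A)$ forces $A(w+m)=Aw=b$, so whether $u+(w+m)$ belongs to $u+\fiber{A}{b}$ is governed entirely by nonnegativity of $w+m$. Since $v+m=u+(w+m)\notin u+\fiber{A}{b}$, we must have $w+m\notin\NN^d$; pick a coordinate $j$ with $w_j+m_j<0$. Because $m\in\pm\cM$, we have $|m_j|\le\|m\|_\infty\le\cC(\cM)$, and combined with $w_j\ge 0$ this yields $0\le w_j<-m_j\le\cC(\cM)$; in particular $w_j\in\{0,\dots,\cC(\cM)\}$, so $w$ lies in $\{w'\in\fiber{A}{b}:w'_j=w_j\}$.

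It remains to check $j\in\supp(u)$, and here I use the other half of the hypothesis on $m$, namely $v+m\in\NN^d$: in coordinate $j$ this reads $u_j+w_j+m_j=(v+m)_j\ge 0$, while we just saw $w_j+m_j<0$; hence $u_j\ge-(w_j+m_j)\ge 1$, so $u_j>0$ and $j\in\supp(u)$. Putting the pieces together, $v=u+w$ with $w\in\{w'\in\fiber{A}{b}:w'_j=r\}$ for $r:=w_j\le\cC(\cM)$ and $j\in\supp(u)$, which is exactly membership in $u+\bigcup_{j\in\supp(u)}\bigcup_{r=0}^{\cC(\cM)}\{w'\in\fiber{A}{b}:w'_j=r\}$.

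I do not expect a genuine obstacle: the statement follows by a direct inspection of the definitions of $\boundaryDir{\cM}{u}{\fiber{A}{b}}$ and $\cC(\cM)$. The one point worth isolating as a \emph{key step} is that the coordinate $j$ witnessing $w+m\notin\NN^d$ automatically lies in $\supp(u)$ — this is forced because $v+m$ must stay nonnegative, so the positive entry $u_j$ has to compensate for the negative entry $w_j+m_j$. One could note that the inner union may even be taken only up to $\cC(\cM)-1$, but stating it up to $\cC(\cM)$ is harmless and keeps the bound clean.
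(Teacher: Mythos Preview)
Your argument is correct and follows essentially the same route as the paper's own proof: write $v=u+w$, use $m\in\ker_\ZZ(A)$ to reduce membership in $u+\fiber{A}{b}$ to nonnegativity of $w+m$, pick a coordinate $j$ where $w_j+m_j<0$, and then use $v+m\in\NN^d$ to force $j\in\supp(u)$. Your observation that the inner union could stop at $\cC(\cM)-1$ is a harmless sharpening not made explicit in the paper.
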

\begin{proof}
Let $v\in\boundaryDir{\cM}{u}{\fiber{A}{b}}$, then there is
$m\in\pm\cM$ such that $v+m\in\NN^d$, but $v+m\not\in
u+\fiber{A}{b}$. Since $v\in u+\fiber{A}{b}$, there is
$w\in\fiber{A}{b}$ such that $v=u+w$. The vector $w+m$ must have a
negative entry, since otherwise $w+m\in\NN^d$, that is
$w+m\in\fiber{A}{b}$ which implies $v+m=u+w+m\in u+\fiber{A}{b}$.
Hence, there is $j\in[d]$ such that $(w+m)_j<0$. 
Suppose $j\not\in\supp(u)$. Then $(u+w+m)_j=(w+m)_j<0$, which contradicts
$u+w+m=v+m\in\NN^d$. Thus, $j\in\supp(u)$ and
$w_j<-m_j$. Since that means $w_r\le\cC(\cM)$, the statement follows.
\end{proof}

Lemma~\ref{l:ExpansionOfBoundary} allows to measure edge-expansion by
essentially comparing the growth of fibers with the growth of their
boundary.  The idea is to show that the boundary grows asymptotically
slower than the fiber itself.  Counting the number of integer points
in a polytope is the subject of Ehrhart theory \cite{Ehrhart1977}.  Let
$P\subset\QQ^d$ be a polytope and consider the map $L_{P}:\NN\to\NN$ which
counts the integer points in the $i$-th dilation $iP:=\{i\cdot
x\in\QQ^d: x\in P\}$, i.e.
\begin{equation*}
L_{P}(i):=|(iP)\cap\ZZ^d|.
\end{equation*}
According to Ehrhart's theorem (cf. \cite[Theorem~3.23]{Beck2007}), $L_P$ is a
\emph{quasi-polynomial} of degree $r:=\dim(P)$, that is there exist
periodic functions $c_0,\ldots,c_r:\NN\to\ZZ$ with integral periods such that
\begin{equation*}
L_P(t)=c_r(t)t^r+c_{r-1}(t)t^{r-1}+ \ldots + c_0(t)
\end{equation*}
with $c_r$ not identically zero. Here, the dimension of a set is the
dimension of its affine space. This applies to rays in affine
semigroups: Since for any
$i\in\NN$, the integer points of $\relaxfiber{A}{ib}$ are precisely
the elements of $\fiber{A}{ib}$,
$L_{\relaxfiber{A}{b}}(i)=|\fiber{A}{ib}|$ for all $i\in\NN$ and hence
$|\fiber{A}{ib}|$ grows in $i$ (quasi-)polynomial of degree
$\dim(\relaxfiber{A}{b})$.

\begin{remark}\label{r:TotallyUnimodular}
For any integer matrix $A$ and $b\in\cone{A}$,
$\dim(\relaxfiber{A}{b})\ge\dim(\fiber{A}{b})$. In particular, if
$\dim(\fiber{A}{b})>0$, then $(|\fiber{A}{ib}|)_{i\in\NN}$ is
unbounded. If $A$ is totally unimodular, then
$\relaxfiber{A}{b}=\conv(\fiber{A}{b})$ and hence the dimensions of
$\relaxfiber{A}{b}$ and $\fiber{A}{b}$ coincide.
\end{remark}

The sets appearing in Lemma~\ref{l:BoundaryMultipleMoves} are not
precisely dilates of polytopes and Ehrhart theory does not apply
directly. Nevertheless, their growth can be bounded as well in terms
of their dimension.

\begin{lemma}\label{l:HyperplanesAndPolytopes}
Let $A\in\ZZ^{m\times d}$, $b\in\ZZ^m$, and fix integers
$j\in[d]$ and $l\in\NN$. If for all $i\in\NN_{>0}$,
$\relaxfiber{A}{ib}$ is not completely
contained in the hyperplane $H:=\{x\in\QQ^d: x_j=l\}$, then there is
$C\in\NN$ such that the number of integer points in
$\relaxfiber{A}{ib}\cap H$ is bounded from above by $ C\cdot
i^{\dim(\relaxfiber{A}{b})-1}$. 
\end{lemma}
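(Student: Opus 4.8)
The plan is to reduce the statement to an honest Ehrhart-type estimate by working inside a fixed linear slice of the ambient space. First I would set $r := \dim(\relaxfiber{A}{b})$ and observe that, since $\relaxfiber{A}{ib} = i\cdot\relaxfiber{A}{b}$ for all $i\in\NN_{>0}$, the affine hull $W$ of $\relaxfiber{A}{b}$ is a fixed affine subspace of $\QQ^d$ of dimension $r$, and the affine hull of $\relaxfiber{A}{ib}$ is $i\cdot W$ (a translate/scaling of $W$ with the same direction space). The hyperplane $H = \{x_j = l\}$ has codimension $1$, so the hypothesis that $\relaxfiber{A}{ib}\not\subseteq H$ for all $i>0$ translates, via the scaling relation, into: $W$ is not contained in the (fixed, affine) hyperplane $H_1 = \{x_j = l\}$ for $i=1$-type considerations — more carefully, one checks that $i\cdot W\subseteq H$ for some $i$ would force $W$ itself to lie in a hyperplane of the form $\{x_j = l/i\}$, and either way the intersection $W\cap H$ (resp. $(iW)\cap H$) is a proper affine subspace of $W$ (resp. $iW$), hence of dimension at most $r-1$.

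Next I would intersect with the polytope. Inside the $r$-dimensional affine space $W$, the set $\relaxfiber{A}{b}$ is a full-dimensional polytope $P$, and $\relaxfiber{A}{ib}\cap H = iP \cap H$ is a polytope of dimension at most $r-1$ (it is the intersection of the $r$-dimensional polytope $iP$ with an affine subspace of $iP$'s hull of dimension $\le r-1$, using the previous paragraph). The catch is that as $i$ varies, $iP\cap H$ is not literally the dilate of a single polytope — scaling $iP$ moves it relative to the fixed hyperplane $H$. To handle this I would pass to coordinates on $W$: fix an affine-lattice parametrization $\phi:\QQ^{r}\to W$ with $\phi(\ZZ^{r}) = W\cap\ZZ^{d}$ (possible after clearing denominators; one only needs that $\phi$ maps a full-rank sublattice into $\ZZ^d$, which suffices for an upper bound), pull $P$ back to a rational polytope $P'\subset\QQ^{r}$, and pull back the family of affine hyperplanes $\tfrac1i H \cap W$. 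Under $\phi$, the constraint $x_j = l$ becomes an affine equation $\langle a, y\rangle = l$ on $y\in\QQ^{r}$, so $\phi^{-1}(iP\cap H)$ is $\{y\in iP' : \langle a,y\rangle = l\}$. Now slice one dimension off by substituting, on the hyperplane $\langle a,y\rangle = l$, to get an $(r-1)$-dimensional rational polytope $Q_i$ whose lattice-point count dominates $|\relaxfiber{A}{ib}\cap H\cap\ZZ^d|$ up to a fixed multiplicative constant (the index of $\phi$).

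Finally I would bound $|Q_i\cap\ZZ^{r-1}|$. Since $Q_i\subseteq iP'\cap\{\langle a,y\rangle = l\}$ and $iP'$ has vertices of coordinate size $O(i)$, the polytope $Q_i$ is contained in a cube of side $O(i)$ in $\QQ^{r-1}$, and a rational polytope of dimension $\le r-1$ sitting in such a cube contains at most $C\cdot i^{r-1}$ lattice points for a constant $C$ depending only on $P$ and $j$ (e.g. bound it crudely by the number of lattice points in the bounding box, or invoke that the number of lattice points in a convex body is at most a constant times $\max(1,\mathrm{vol})$ plus lower-order boundary terms, all $O(i^{r-1})$). Tracking back through $\phi$ gives the claimed bound $C\cdot i^{\dim(\relaxfiber{A}{b})-1}$. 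The main obstacle is the bookkeeping in the middle step: making precise that the moving family $iP\cap H$ can be uniformly controlled — i.e. that one does not pick up an extra factor that grows with $i$ — and that the dimension genuinely drops by one, which is exactly where the hypothesis $\relaxfiber{A}{ib}\not\subseteq H$ is used. Everything else is routine Ehrhart-style volume counting, and since only an upper bound is needed, one can afford to be generous with constants throughout.
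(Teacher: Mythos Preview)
Your approach is sound in spirit but differs from the paper's, and it contains a fixable technical slip in the coordinate-change step.

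The paper does not parametrize the affine hull of $P=\relaxfiber{A}{b}$. Instead it simply projects out the $j$-th coordinate: writing $A=(a_1,\dots,a_d)$ and letting $A'$ be $A$ with the $j$-th column deleted, the slice $iP\cap H$ bijects onto $Q_i=\{x\in\QQ_{\ge0}^{d-1}: A'x=ib-la_j\}$. This is a one-parameter family of rational polytopes with right-hand side affine in $i$, and the paper then invokes a parametric Ehrhart result (Verdoolaege) to conclude that $|Q_i\cap\ZZ^{d-1}|$ is piecewise quasi-polynomial in $i$ of degree $r':=\dim(iP\cap H)<r$. The dimension drop $r'<r$ is argued exactly as you do, via the hypothesis.

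Your route---reduce to an $(r-1)$-dimensional object and then box-count---is more elementary and avoids the external reference, which is a genuine advantage. However, your parametrization step is not quite right as written: you fix an affine map $\phi:\QQ^r\to W=\mathrm{aff}(P)$, but for $i>1$ the dilate $iP$ lives in $iW\neq W$ (unless $0\in W$), so $\phi^{-1}(iP\cap H)$ is undefined. If one repairs this with the natural family $\phi_i(y)=iv+My$ (where $\phi(y)=v+My$), then $\phi_i^{-1}(iP)=iP'$ as you want, but the pulled-back hyperplane becomes $\{\langle a,y\rangle = l - i\,v_j\}$, not $\{\langle a,y\rangle=l\}$. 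Fortunately this $i$-dependent constant is harmless for your final step: after substituting away one coordinate, the remaining $(r-1)$ coordinates still lie in a box of side $O(i)$ because $y\in iP'$, and the crude box count gives $O(i^{r-1})$ lattice points regardless of the affine shift. So the argument survives, but the write-up should either use the varying maps $\phi_i$, or---cleaner---imitate the paper and project out coordinate $j$ directly, which sidesteps the moving affine hull entirely and lands you in a fixed ambient $\QQ^{d-1}$ with a fixed direction space $\ker A'$.
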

\begin{proof}
Write $P:=\relaxfiber{A}{b}$ and $r:=\dim(P)$.  For $i$ large
enough, the dimension of $(iP)\cap H$ stabilizes, i.e. there are
$r',N\in\NN$ such that $r':=\dim(iP\cap H)$ for all $i\ge N$. The
affine space of $iP\cap H$ is completely contained in $H$ whereas the
affine space of $iP$ has elements outside of $H$. That implies 
$r'<r$.  Let $A=(a_1,\ldots,a_d)$ and $A'$ be 
submatrix of $A$ omitting the $j$-th column, then the (bijective) projection
of $iP\cap H$ onto all coordinates different from $j$ is
\begin{equation*}
Q_i:=\{x\in\QQ^{d-1}: A'x=ib-la_j\}.
\end{equation*}
By \cite[Proposition~1]{Verdoolaege2007}, there exists finitely many
sets $C_1,\ldots,C_k$ covering $\NN$ such that for $i\in C_j$, the
number of integer points in $Q_i$ is a quasi-polynomial of degree
$r'$. 
\end{proof}

\begin{lemma}\label{l:GrowthOfQuasiPoly}
Let $p(t)=\sum_{s=0}^rc_s(t)t^s$ be a quasi-polynomial of degree $r>0$
and let $k\in\NN$ such that $c_r(k)>0$. There exists $n\in\NN_{>0}$
and $N\in\NN$ such that for all $i\in k+n\cdot\NN$ with $i\ge N$,
$2p(i)<p(i+ni)$.
\end{lemma}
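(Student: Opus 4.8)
The plan is to exploit that along a fixed residue class the quasi-polynomial $p$ behaves like an honest polynomial of degree $r$ with positive leading coefficient, so that $p(i+ni)$ grows like $(1+n)^r i^r$ while $2p(i)$ grows like $2i^r$; choosing $n$ large enough to make $(1+n)^r>2$ then forces the inequality for all sufficiently large $i$ in the class. The one subtlety is that ``$i+ni$'' must land in the \emph{same} residue class modulo the period of $p$ as $i$ does, which is why we will take $n$ to be a suitable multiple of that period.

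\begin{proof}
Let $\Pi\in\NN_{>0}$ be a common period of the periodic functions $c_0,\ldots,c_r$, so that $c_s(t+\Pi)=c_s(t)$ for all $s$ and all $t\in\NN$. Fix $k\in\NN$ with $c_r(k)>0$ as in the hypothesis. Choose $n\in\NN_{>0}$ to be a multiple of $\Pi$ large enough that $(1+n)^r>2$; this is possible because $r>0$. For every $i\in k+n\cdot\NN$ we then have $i\equiv k\pmod{\Pi}$, and since $n$ is a multiple of $\Pi$ also $i+ni=(1+n)i\equiv k\pmod{\Pi}$. Hence, writing $\gamma_s:=c_s(k)$, for all such $i$ we have
\begin{equation*}
p(i)=\sum_{s=0}^{r}\gamma_s i^s
\qquad\text{and}\qquad
p(i+ni)=p\bigl((1+n)i\bigr)=\sum_{s=0}^{r}\gamma_s (1+n)^s i^s,
\end{equation*}
with $\gamma_r=c_r(k)>0$. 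Thus along the progression $k+n\cdot\NN$ both $p(i)$ and $p(i+ni)$ agree with fixed \emph{polynomials} in $i$ of degree $r$ and positive leading coefficient.

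It remains to compare the two polynomials $2p(i)$ and $p(i+ni)$ for large $i$. Their difference is
\begin{equation*}
p(i+ni)-2p(i)=\sum_{s=0}^{r}\gamma_s\bigl((1+n)^s-2\bigr)i^s,
\end{equation*}
whose leading coefficient is $\gamma_r\bigl((1+n)^r-2\bigr)$, which is strictly positive by the choice of $n$ and the sign of $\gamma_r$. A polynomial in $i$ with positive leading coefficient and degree $r>0$ is eventually positive, so there is $N\in\NN$ such that $p(i+ni)-2p(i)>0$ for all $i\ge N$. Combining this with the requirement $i\in k+n\cdot\NN$ yields the claim: for all $i\in k+n\cdot\NN$ with $i\ge N$ we have $2p(i)<p(i+ni)$.
\end{proof}

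The main (and only) obstacle I anticipate is the bookkeeping with residue classes: one must make sure that evaluating $p$ at $i$ and at $(1+n)i$ uses the \emph{same} coefficient values $c_s(k)$, which is exactly what forces $n$ to be a multiple of the period $\Pi$ and restricts $i$ to the arithmetic progression $k+n\cdot\NN$. Everything else is the elementary fact that a real polynomial of positive degree with positive leading coefficient is eventually positive.
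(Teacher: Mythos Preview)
Your proof is correct and follows essentially the same approach as the paper: pick $n$ so that the leading coefficient is constant along the progression $k+n\NN$, then compare the degree-$r$ term $c_r(k)\bigl((1+n)^r-2\bigr)i^r$ against the lower-order remainder. The only cosmetic difference is that you take $n$ to be a multiple of a \emph{common} period of all the $c_s$, reducing everything to honest polynomials, whereas the paper takes $n$ to be a period of $c_r$ alone and treats the lower-order terms as a bounded quasi-polynomial of degree at most $r-1$; either way the leading term dominates.
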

\begin{proof}
Let $n\ge 2$ such that $c_r(i+ni)=c_r(i)$ for all $i\in\NN$ (i.e. if
$c_r$ is not a constant, let $n\ge 2$ be the period of $c_r$). For all
$i\in k+n\cdot\NN$, $c_r(i+ni)=c_r(i)=c_r(k)>0$ and
\begin{equation*}
p(i+ni)-2\cdot
p(i)=c_r(k)\left(
(1+n)^r-2\right)i^r+\sum_{s=0}^{r-1}\left(c_s(i+ni)(1+n)^s-2c_s(i)\right)i^s.
\end{equation*}
The sum in the term on the right-hand side of this equation is a
quasi-polynomial of degree at most $r-1$ and the left term on the
right-hand side is a polynomial of degree $r>0$ whose leading
coefficient is positive due to $n\ge 2$ and $r>0$. Thus, there is
$N\in\NN$ such that for all $i\in k+n\cdot\NN$ with $i\ge N$,
\begin{equation*}
c_r(k)\left(
(1+n)^r-2\right)i^r>
-\sum_{s=0}^{r-1}\left(c_s(i+in)(1+n)^s-2c_s(k)\right)i^s,
\end{equation*}
that is $2p(i)<p(i+ni)$.
\end{proof}

The growth of the state space needs to be compared with the growth of
the second largest eigenvalue modulus to disprove rapid mixing, as
demonstrated in Example~\ref{ex:NoExpander}. With the following
property of a sequences $(b_i)_{i\in\NN}$ in $\cone{A}$, however, a
lower bound the second largest eigenvalue in terms of the parameter
$(i)_{i\in\NN}$ instead of $(|\fiber{A}{b_i}|)_{i\in\NN}$ suffices:

\begin{defn}\label{d:MeaningfulParametrization}
A sequence $(b_i)_{i\in\NN}$ in $\cone{A}$ has a \emph{meaningful
parametrization} if there exists a polynomial $q\in\QQ[t]$ such that
$|\fiber{A}{b_i}|\le q(i)$ for all $i\in\NN$.
\end{defn}

\begin{example}
Consider $A_2$ from Example~\ref{ex:NoExpander}, then
$|\fiber{A_2}{i}|=i+1$. The sequence $(2^i)_{i\in\NN}$ in
$\cone{A_2}=\NN$ is not meaningfully parametrized. However, it is a
subsequence of the sequence $(i)_{i\in\NN}$, which has a meaningful
parametrization.
\end{example}

\begin{prop}\label{p:MeaningfulParametrization}
Let $A\in\ZZ^{m\times d}$ and let $(b_i)_{i\in\NN}$ be a sequence in
$\cone{A}$ satisfying
$(\|b_i\|)_{i\in\NN}\in\mathcal{O}(i^r)_{i\in\NN}$ for some $r\in\NN$.
Then $(b_i)_{i\in\NN}$ has a meaningful parametrization.
\end{prop}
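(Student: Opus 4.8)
The plan is to bound $|\fiber{A}{b_i}|$ by a polynomial in $i$ directly, using the hypothesis $(\|b_i\|)_{i\in\NN}\in\mathcal{O}(i^r)$ together with the general standing assumption $\ker_\ZZ(A)\cap\NN^d=\{0\}$, which guarantees every fiber is finite. First I would recall that, because $\ker_\ZZ(A)\cap\NN^d=\{0\}$, there is a linear functional $c\in\QQ_{>0}^d$ (a strictly positive grading) that is constant on each fiber: concretely, since the recession cone of $\relaxfiber{A}{b}$ is $\{x\in\QQ^d_{\ge0}: Ax=0\}=\{0\}$, the polytope $\relaxfiber{A}{b}$ is bounded, and one can find $c>0$ with $c^T u$ bounded on $\fiber{A}{b}$ in terms of $\|b\|$. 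The cleanest way: pick any row combination or use that $\cone{A}$ is pointed so there is $w\in\QQ^m$ with $w^T a_j>0$ for every column $a_j$; then for $u\in\fiber{A}{b}$ we get $\sum_j u_j (w^T a_j) = w^T b$, hence $u_j \le (w^Tb)/(\min_k w^T a_k) =: K\|b\|$ for all $j$ (absorbing constants).

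Next I would count: every $u\in\fiber{A}{b_i}$ lies in the box $\{0,1,\dots,\lfloor K\|b_i\|\rfloor\}^d$, so
\begin{equation*}
|\fiber{A}{b_i}| \le (K\|b_i\| + 1)^d.
\end{equation*}
By hypothesis there are $i_0\in\NN$ and $C\in\QQ_{>0}$ with $\|b_i\|\le C i^r$ for all $i\ge i_0$, so for $i\ge i_0$ we obtain $|\fiber{A}{b_i}| \le (KC i^r + 1)^d$, which is a polynomial in $i$ of degree $rd$. For the finitely many indices $i<i_0$ the fibers are finite, so their sizes are bounded by a constant; adding that constant to the polynomial (or taking a maximum) yields a single polynomial $q\in\QQ[t]$ with $|\fiber{A}{b_i}|\le q(i)$ for all $i\in\NN$, which is exactly a meaningful parametrization in the sense of Definition~\ref{d:MeaningfulParametrization}.

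The only genuine point requiring care is the existence of the positive functional $w$ bounding coordinates on fibers; this is where the pointedness of $\cone{A}$ (equivalently $\ker_\ZZ(A)\cap\NN^d=\{0\}$) is essential, since without it fibers could be infinite and no polynomial bound exists. I expect this to be the main (though still routine) obstacle: one must argue that $\cone{A}$ being pointed produces a separating hyperplane functional strictly positive on all generators, e.g. by applying a standard separation argument to the pointed cone $\QQ_{\ge0}\cdot\{a_1,\dots,a_d\}$ and its lineality space $\{0\}$, or equivalently by invoking that a pointed rational cone admits a strictly positive linear functional. Everything after that is elementary counting, and the handling of the finitely many small indices is cosmetic.
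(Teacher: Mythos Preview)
Your proposal is correct and follows essentially the same argument as the paper: both exploit $\ker_\ZZ(A)\cap\NN^d=\{0\}$ to produce a strictly positive linear functional (the paper writes it as a $\QQ$-combination $w=\sum_i\lambda_i a_i\in\QQ^d_{>0}$ of the rows, you write it dually as $w\in\QQ^m$ with $w^Ta_j>0$ on all columns, which is the same object via Gordan's theorem), then bound $\|u\|_\infty$ linearly in $\|b\|$ and count the resulting box to get $|\fiber{A}{b}|\le(K\|b\|)^d$. Your handling of the finitely many indices $i<i_0$ is in fact slightly more careful than the paper's, which simply asserts the conclusion once the bound $\|b_i\|\le C\,i^r$ is available.
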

\begin{proof}
Denote by $a_1,\dots,a_m\in\ZZ^d$ the rows of $A$. Since
$\ker_\ZZ(A)\cap\NN^d=\{0\}$, there exist coefficients
$\lambda_1,\dots,\lambda_m\in\QQ$ such that
$w:=\sum_{i=1}^m\lambda_ia_i\in\QQ^d_{>0}$. In particular, for any
$b\in\NN A$ and for any $u\in\fiber{A}{b}$, 
$\|u\|_{\infty}\cdot\min_{i\in[d]}w_i\le w^Tu\le
m\cdot\|\lambda\|_\infty\cdot\|b\|_\infty$. Thus, 
\begin{equation*}
|\fiber{A}{b}|\le\left(\frac{m\cdot\|\lambda\|_\infty\|b\|_\infty}{\min_{i\in[d]}w_i}\right)^d.
\end{equation*} 
Hence, if $\|b_i\|\le C\cdot i^r$ for all $i\in\NN$,
then $(b_i)_{i\in\NN}$ has a meaningful parametrization.
\end{proof}

For sequences with a meaningful parametrization, it suffices to bound
the edge-expansion appropriately from above to show a slow mixing
behaviour.

\begin{lemma}\label{l:MeaningfulLemma}
Let $A\in\ZZ^{m\times d}$, $\cM\subset\ker_\ZZ(A)$ be a finite set, and
let $(b_i)_{i\in\NN}$ be a sequence in $\cone{A}$ with meaningful
parametrization. If there is an infinite subset $\cI\subseteq\NN$ and
$C\in\NN$ such that $h(\fibergraph{A}{b_i}{\cM})\le\frac{C}{i}$ for
all $i\in\cI$, then $(\fibergraph{A}{b_i}{\cM})_{i\in\NN}$ is not
rapidly mixing.
\end{lemma}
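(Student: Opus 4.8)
The plan is to combine the hypothesis on edge-expansion with Proposition~\ref{p:EdgeExpansion} to produce a lower bound on the second largest eigenvalue modulus $\lambda_i$ along the infinite set $\cI$, and then to check that this bound is incompatible with the definition of rapid mixing once the meaningful parametrization is used to control $\log|\fiber{A}{b_i}|$ from above. First I would observe that $\fibergraph{A}{b_i}{\cM}$ is connected (since $\cM$ is a Markov basis) and $|\pm\cM|$-regular by the discussion after Definition~\ref{d:FiberGraph}, so Proposition~\ref{p:EdgeExpansion} applies with $d=|\pm\cM|\le 2|\cM|$; combined with the hypothesis $h(\fibergraph{A}{b_i}{\cM})\le C/i$ for $i\in\cI$, this yields
\begin{equation*}
\lambda_i\ \ge\ 1-\frac{2}{|\pm\cM|}\cdot h(\fibergraph{A}{b_i}{\cM})\ \ge\ 1-\frac{2C}{|\pm\cM|\cdot i}\ \ge\ 1-\frac{C'}{i}
\end{equation*}
for all $i\in\cI$, where $C':=2C/|\pm\cM|$ is a positive constant (if $\cM=\nothing$ the fibers are singletons and the statement is vacuous, so we may assume $\cM\neq\nothing$).

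Next I would argue by contradiction: suppose $(\fibergraph{A}{b_i}{\cM})_{i\in\NN}$ were rapidly mixing, witnessed by a polynomial $p\in\QQ_{\ge 0}[t]$ with $\lambda_i\le 1-\frac{1}{p(\log|\fiber{A}{b_i}|)}$ for all $i\in\NN$. Combining this with the lower bound from the previous paragraph gives, for every $i\in\cI$,
\begin{equation*}
\frac{1}{p(\log|\fiber{A}{b_i}|)}\ \le\ 1-\lambda_i\ \le\ \frac{C'}{i},
\end{equation*}
hence $i\le C'\cdot p(\log|\fiber{A}{b_i}|)$. Now I invoke the meaningful parametrization: there is a polynomial $q\in\QQ[t]$ with $|\fiber{A}{b_i}|\le q(i)$ for all $i$, so $\log|\fiber{A}{b_i}|\le\log q(i)\in\mathcal{O}(\log i)_{i\in\NN}$, and therefore $p(\log|\fiber{A}{b_i}|)\in\mathcal{O}((\log i)^{\deg p})_{i\in\NN}$. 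Substituting back, $i\le C'\cdot p(\log q(i))$ forces $i$ to be bounded above by a fixed polynomial in $\log i$, which fails for all sufficiently large $i$. Since $\cI$ is infinite this is the desired contradiction, so $(\fibergraph{A}{b_i}{\cM})_{i\in\NN}$ is not rapidly mixing.

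The only mildly delicate point is the asymptotic comparison $i\not\in\mathcal{O}((\log i)^k)_{i\in\NN}$ for any fixed $k$, together with keeping track that $\log q(i)$ is well-defined and $\mathcal O(\log i)$ even when $q$ has negative coefficients (one bounds $|\fiber{A}{b_i}|$, hence $q(i)$ on the relevant range, by $C''i^{\deg q}$ for large $i$); everything else is a direct chaining of the already-established inequalities. I expect no real obstacle here — the lemma is essentially a bookkeeping step packaging Proposition~\ref{p:EdgeExpansion} and Definitions~\ref{d:AsymptoticMixing} and~\ref{d:MeaningfulParametrization} into a convenient form for the proof of Theorem~\ref{t:NotRapidlyMixing}.
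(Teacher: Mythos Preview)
Your argument is correct and follows exactly the paper's own proof: apply Proposition~\ref{p:EdgeExpansion} to convert the edge-expansion bound into $\lambda_i\ge 1-C'/i$ on $\cI$, assume rapid mixing for contradiction, and use the meaningful parametrization to force $i\in\mathcal{O}((\log i)^{\deg p})$, which is absurd. One small slip: the lemma only assumes $\cM$ is a finite set, not a Markov basis, so you cannot assert connectedness of $\fibergraph{A}{b_i}{\cM}$; this is harmless, however, since a disconnected graph has $\lambda_i=1$ and then the rapid-mixing inequality $\lambda_i\le 1-1/p(\cdot)$ already fails outright.
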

\begin{proof}
Let $\lambda_i\in[0,1]$ be the second largest eigenvalue modulus of
the simple walk on $\fibergraph{A}{b_i}{\cM}$ and assume that the
sequence mixes rapidly. Then there exists a polynomial $p\in\QQ_{\ge
0}[t]$
such that for all $i\in\cI$,
\begin{equation*}
1-\frac{1}{p(\log|\fiber{A}{b_i}|)}\ge \lambda_i\ge 1-\frac{C}{|\cM|\cdot i}
\end{equation*}
where we have used the assumption on the edge-expansion and
Proposition~\ref{p:EdgeExpansion} to obtain the lower bound. This
implies that for all $i\in\cI$,
$\frac{1}{i}\cdot p(\log|\fiber{A}{b_i}|)\ge\frac{|\cM|}{C}$. However,
since the parametrization is meaningful, there exists a polynomial
$q\in\QQ[t]$ such that $|\fiber{A}{b_i}|\le q(i)$ and thus
$p(\log|\fiber{A}{b_i}|)\le p(\log q(i))$ for $i$ sufficiently large, which gives a
contradiction since $\cI$ is an infinite subset and hence unbounded.
\end{proof}
We are now ready to prove our main theorem: 
\begin{proof}[Proof of Theorem~\ref{t:NotRapidlyMixing}]
Since $(b_i)_{i\in\NN}$ is dominated, there is $b\in\cone{A}$ with
$\dim(\relaxfiber{A}{b})>0$ such that $b_i':=b_i-i\cdot b\in\cone{A}$
for all $i\in\NN$. Moreover, there is $u\in\fiber{A}{b}$ and a
sequence $(w_i)_{i\in\NN}$ in $\NN^d$ such that for all $i\in\NN$,
$w_i\in\fiber{A}{b_i'}$ and $\supp(w_i)\subseteq\supp(u)$. 
Clearly, it suffices to show the theorem for the subsequence
$(b_{(\cC(\cM)+1)i})_{i\in\NN}$ which inherits the properties of
being dominated and being meaningfully parametrized from $(b_i)_{i\in\NN}$
due to the linear re-parametrization. 
Thus, we replace $b_i$ with $b_{(\cC(\cM)+1)i}$, $b$
with $(\cC(\cM)+1)\cdot b$, and $b_i'$ with
$b_{(\cC(\cM)+1)i}'$.  Additionally, we replace $w_i$ with
$w_{(\cC(\cM)+1)i}$ and $u$ with $(\cC(\cM)+1)\cdot u$,
which does not change the support of $u$.
After these changes, we have $u_i>\cC(\cM)$ for all
$i\in\supp(u)$, which is needed later in the proof.
The Ehrhart quasi-polynomial
$L_{\relaxfiber{A}{b}}$ has degree $r:=\dim(\relaxfiber{A}{b})$ and by
the definition of being dominated, $r>0$. 
Write $L_{\relaxfiber{A}{b}}(i)=\sum_{s=0}^r c_s(i)i^s$ with $c_r$ not
identically zero. 
Since $L_{\relaxfiber{A}{b}}(i)=|\fiber{A}{ib}|>
0$, there exists $k\in\NN$ such that $c_r(k)>0$. By Lemma
\ref{l:GrowthOfQuasiPoly}, there exists $n\in\NN_{>0}$ and $N\in\NN$
such that $2|\fiber{A}{ib}|\le|\fiber{A}{(i+ni)b}|$ for all $i\in
(k+n\cdot\NN)\cap\NN_{\ge N}=:\cI$. By the choice of $w_i$ and
$u$, $A\cdot(w_{i+ni}+ ni\cdot u)=b_{i+ni}'+ni\cdot b=b_{i+ni}-ib$ for
all $i\in\cI$ and hence $w_{i+ni}+ni\cdot
u+\fiber{A}{ib}\subsetneq\fiber{A}{b_{i+ni}}$. In particular, for any
$i\in\cI$
\begin{equation*}
2|\fiber{A}{ib}|\le|\fiber{A}{(i+ni)b}|=|w_{i+ni}+\fiber{A}{(i+ni)b}|\le|\fiber{A}{b_{i+ni}}|.
\end{equation*}
For any $i\in\cI$, set $u_i:=w_{i+ni}+ni\cdot u$, then 
Lemma~\ref{l:BoundaryMultipleMoves} gives  
\begin{equation}\label{equ:BoundBoundary}
|\boundaryDir{\cM}{u_i}{\fiber{A}{ib}}|\le
\sum_{j\in\supp(u_i)}\sum_{l=0}^{\cC(\cM)}|\{v\in\fiber{A}{ib}:v_j=l\}|.
\end{equation}
Since $2|\fiber{A}{ib}|\le|\fiber{A}{b_{i+ni}}|$
and $u_{i}\in\fiber{A}{b'_{i+ni}+ni\cdot b}$ for all $i\in\cI$,
an application of Lemma~\ref{l:ExpansionOfBoundary} yields the upper
bound on the edge-expansion of the graph
$\fibergraph{A}{b_{i+ni}}{\cM}$:
\begin{equation*}
h(\fibergraph{A}{b_{i+ni}}{\cM})\le
\frac{2|\cM|\cdot|\boundaryDir{\cM}{u_i}{\fiber{A}{ib}}|}{|\fiber{A}{ib}|}\le
2|\cM|\cdot\frac{\sum_{j\in\supp(u)}\sum_{l=0}^{\cC(\cM)}|\{w\in\fiber{A}{ib}:w_j=l\}|}{|\fiber{A}{ib}|},
\end{equation*}
where equation \eqref{equ:BoundBoundary} and
$\supp(u_i)\subseteq\supp(u)$ was used in the first and second
inequality respectively.
For any $j\in\supp(u)$ and $l\in[\cC(\cM)]\cup\{0\}$, let $H_{j,l}=\{x\in\QQ^d:
x_j=l\}$ be a hyperplane in $\QQ^d$, then for all $i\in\cI$, the
integer points in $(i\cdot\relaxfiber{A}{b})\cap H_{j,l}$ are
precisely $L_{j,l}(i):=|\{w\in\fiber{A}{ib}: w_j=l\}|$.
Since $u_j>\cC(\cM)$ for all $j\in\supp(u)$, the vector $i\cdot u\in
\relaxfiber{A}{ib}=i\cdot\relaxfiber{A}{b}$ is not contained in
$(i\cdot\relaxfiber{A}{b})\cap H_{j,l}$ for all $l\in[\cC(\cM)]\cup\{0\}$ and
all $i\in\cI$.  Lemma~\ref{l:HyperplanesAndPolytopes} then implies
that for all $j\in\supp(u)$ and $l\in[\cC(\cM)]\cup\{0\}$, there is
a constant $C_{j,l}\in\NN$ such that $L_{j,l}(i)\le C_{j,l}\cdot i^{r-1}$ for all
$i\in\NN$. Let $C\in\NN$ be the maximum of all $C_{j,l}$'s, then
\begin{equation*}
\begin{split}
h(\fibergraph{A}{b_{i+ni}}{\cM})&\le
2|\cM|\cdot\frac{\sum_{j\in\supp(u)}\sum_{l=0}^{\cC(\cM)}L_{j,l}(i)}{L_{\relaxfiber{A}{b}}(i)}\\
&=2|\cM|\cdot\frac{|\supp(u)|\cdot(\cC(\cM)+1)\cdot C\cdot i^{r-1}}{c_r(k)i^r
+ \sum_{s=0}^{r-1}c_s(i)i^s}.
\end{split}
\end{equation*}
Since $|\cM|$, $\cC(\cM)$,$n$, $C$, and $\supp(u)$ are constants which
are independent of $i\in\cI$ and since $c_r(k)>0$,
$(h(\fibergraph{A}{b_{i+ni}}{\cM}))_{i\in\cI}\in\mathcal{O}(\frac{1}{(n+1)i})_{i\in\cI}$.
It follows from Proposition~\ref{p:EdgeExpansion} that the sequence is
no expander and if $(b_i)_{i\in\NN}$ has a meaningful parametrization, then
Lemma~\ref{l:MeaningfulLemma} implies that the sequence cannot mix
rapidly.
\end{proof}

\begin{remark}
Basically, Theorem~\ref{t:NotRapidlyMixing} shows the existence of
$C,C'\in\NN_{\ge 1}$ such that the second largest eigenvalue modulus
$\lambda_i$ of the simple walk on $\fibergraph{A}{b_i}{\cM}$
satisfies: $\lambda_i\ge 1-\frac{C}{i}$ for all $i\in C'\cdot\NN$.
Here, the constant $C'$ is due to the many boundary effects and the
fluctuations in Ehrhart quasi-polynomials. For instance, when
$(b_i)_{i\in\NN}$ is a ray instead of a dominated sequence and when
$A$ is totally unimodular, then $\lambda_i\ge 1-\frac{C}{i}$ holds for
all $i\in\NN_{\ge C''}$ for a constant $C''\in\NN$. Also, when
$(b_i)_{i\in\NN}$ is a sequence with $\limsup_{i\in\NN}
H_A(b_i)=\infty$, then $\limsup_{i\in\NN}\lambda_i=1$ due to
Theorem~\ref{t:NotRapidlyMixing} and
Proposition~\ref{p:DistanceToFacets}.
\end{remark}

\begin{cor}\label{c:PolynomialSubsequence}
Let $A\in\ZZ^{m\times d}$ and  let $\cM\subset\ker_\ZZ(A)$ be a Markov
basis for $A$. Let $(b_i)_{i\in\NN}$ from $\cone{A}$ have a meaningful
parametrization and 
suppose there is $p\in\QQ[t]$ with $p(\NN)\subseteq\NN$ such that
$(b_{p(i)})_{i\in\NN}$ is dominated.  Then
$(\fibergraph{A}{b_i}{\cM})_{i\in\NN}$ is not rapidly mixing.
\end{cor}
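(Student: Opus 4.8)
The plan is to reduce the statement to Theorem~\ref{t:NotRapidlyMixing} applied to the subsequence $(b_{p(i)})_{i\in\NN}$, and then to combine the resulting edge-expansion bound with the meaningful parametrization of the full sequence via Lemma~\ref{l:MeaningfulLemma}. First I would observe that since $(b_{p(i)})_{i\in\NN}$ is dominated, the proof of Theorem~\ref{t:NotRapidlyMixing} produces (as recorded in the Remark following it) constants $C,C'\in\NN_{\ge1}$ such that the edge-expansion of $\fibergraph{A}{b_{p(i)}}{\cM}$ is $O(1/i)$ along the infinite index set $\cI := C'\cdot\NN$; more precisely $h(\fibergraph{A}{b_{p(i)}}{\cM})\le \tilde{C}/i$ for all $i\in\cI$ and some constant $\tilde C$. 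Reindexing, this says $h(\fibergraph{A}{b_j}{\cM})\le \tilde{C}/p^{-1}(j)$ for the infinite set of indices $j\in\{p(i):i\in\cI\}$.

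Next I would translate this back to a bound of the form $h(\fibergraph{A}{b_j}{\cM})\le C''/j$ along an infinite index set. The point is that $p\in\QQ[t]$ with $p(\NN)\subseteq\NN$, and we may assume $\deg p\ge 1$ (if $p$ is constant, $(b_{p(i)})_{i\in\NN}$ is an eventually constant sequence and cannot be dominated since domination forces $|\fiber{A}{b_{p(i)}}|$ to be unbounded by Remark~\ref{r:RaysVsDominatingRays} and the ensuing Ehrhart discussion — so the hypothesis is vacuous in that case). With $\deg p = k \ge 1$ we have $p(i)\le D\cdot i^{k}$ for some constant $D$ and all large $i$, hence $i \ge (p(i)/D)^{1/k}$, which gives $h(\fibergraph{A}{b_{p(i)}}{\cM})\le \tilde{C}\cdot D^{1/k}\cdot p(i)^{-1/k}$. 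This is a bound of the form $C''/j^{1/k}$ rather than $C''/j$, so a direct appeal to Lemma~\ref{l:MeaningfulLemma} as stated does not apply verbatim. The cleanest fix is to re-run the short argument of Lemma~\ref{l:MeaningfulLemma} directly: if the sequence mixed rapidly with polynomial $p_0\in\QQ_{\ge0}[t]$, then along the infinite set $J:=\{p(i):i\in\cI\}$ we would get $1-\tfrac{1}{p_0(\log|\fiber{A}{b_j}|)}\ge \lambda_j \ge 1 - \tfrac{2}{|\pm\cM|}h(\fibergraph{A}{b_j}{\cM}) \ge 1 - \tfrac{2C''}{|\pm\cM|\cdot j^{1/k}}$, whence $p_0(\log|\fiber{A}{b_j}|)\ge \tfrac{|\pm\cM|}{2C''}\cdot j^{1/k}$; but meaningful parametrization gives a polynomial $q$ with $|\fiber{A}{b_j}|\le q(j)$, so $p_0(\log q(j))$ grows only polylogarithmically in $j$ while $j^{1/k}\to\infty$, a contradiction since $J$ is infinite and unbounded.

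I expect the only genuine obstacle to be the bookkeeping in the first two paragraphs: namely, extracting from the \emph{proof} of Theorem~\ref{t:NotRapidlyMixing} (not merely its statement) the explicit inequality $h(\fibergraph{A}{b_{p(i)}}{\cM})\le \tilde C/i$ on an infinite set, rather than the weaker "no expander / not rapidly mixing" conclusion, and then correctly tracking how the polynomial reparametrization $j=p(i)$ degrades the $1/i$ decay to $1/j^{1/\deg p}$. Once that is in hand, the contradiction with meaningful parametrization is immediate because any fixed polynomial of $\log q(j)$ is eventually dominated by $j^{1/\deg p}$. Alternatively — and perhaps more in the spirit of the paper — one could slightly strengthen Lemma~\ref{l:MeaningfulLemma} to allow a bound $h\le C/i^{\alpha}$ for any fixed $\alpha>0$ (the proof is verbatim the same), and then the corollary follows by applying that strengthened lemma to the full sequence $(b_i)_{i\in\NN}$ with the index set $J$ and exponent $\alpha = 1/\deg p$.
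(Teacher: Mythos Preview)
Your argument is correct, but it takes a more laborious route than the paper's. The observation you are missing is that the subsequence $(b_{p(i)})_{i\in\NN}$ \emph{itself} has a meaningful parametrization: if $|\fiber{A}{b_j}|\le q(j)$ for a polynomial $q$, then $|\fiber{A}{b_{p(i)}}|\le q(p(i))$, and $q\circ p$ is again a polynomial in $i$. Hence Theorem~\ref{t:NotRapidlyMixing} applies \emph{directly} to the sequence $(b_{p(i)})_{i\in\NN}$ (it is both dominated and meaningfully parametrized), yielding that $(\fibergraph{A}{b_{p(i)}}{\cM})_{i\in\NN}$ is not rapidly mixing. The paper then only needs to arrange, via a linear reparametrization $i\mapsto Ci$, that $p(Ci)$ is strictly increasing so that this is genuinely a subsequence of the original; failure of rapid mixing for a subsequence immediately gives failure for the full sequence, since Definition~\ref{d:AsymptoticMixing} is phrased in terms of $\log|V_i|$ rather than the index.

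By contrast, you dig into the proof of Theorem~\ref{t:NotRapidlyMixing} to extract the quantitative edge-expansion bound $h\le \tilde C/i$, carry it through the substitution $j=p(i)$ at the cost of weakening it to $h\le C''/j^{1/\deg p}$, and then must re-prove a slight strengthening of Lemma~\ref{l:MeaningfulLemma}. All of this is sound, and the strengthened lemma (allowing $h\le C/i^{\alpha}$ for any fixed $\alpha>0$) is a harmless generalization. But it is unnecessary: the whole detour is avoided by noticing that meaningful parametrization is closed under polynomial reindexing, which lets you invoke Theorem~\ref{t:NotRapidlyMixing} as a black box.
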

\begin{proof}
Clearly, there exists $C\in\NN_{>0}$ such that
$p(C\cdot(i+1))>p(C\cdot i)$ for all $i$ sufficiently large. Let
$b'_i:=b_{p(C\cdot i)}$, then $(b'_{i})_{i\in\NN}$ is a
subsequence of $(b_i)_{i\in\NN}$ and hence it suffices to show that
$(\fibergraph{A}{b_i'}{\cM})_{i\in\NN}$ is not rapidly mixing.
Since 
$|\fiber{A}{b_i'}|=|\fiber{A}{b_{p(C\cdot i)}}|\le q(p(C\cdot i))$ for
a polynomial $q\in\QQ[t]$, $(b'_i)_{i\in\NN}$ 
has a meaningful
parametrization.
By assumption, there exists $b\in\cone{A}$ such that $b_{p(i)}-i\cdot
b\in\cone{A}$ for all $i\in\NN$ and hence 
$b'_i-i\cdot C\cdot b\in\cone{A}$ for all $i\in\NN$.
Theorem~\ref{t:NotRapidlyMixing} then implies that $(\fibergraph{A}{b'_i}{\cM})_{i\in\NN}$ is not
rapidly mixing.
\end{proof}

\begin{cor}\label{c:MixingAlongRay}
Let $A\in\ZZ^{m\times d}$, $\cM\subset\ker_\ZZ(A)$ a Markov basis for
$A$, and $b\in\NN A$ with $\dim(\relaxfiber{A}{b})>0$. Suppose that
$(b_i)_{i\in\NN}$ has $(p(k)\cdot b)_{k\in\NN}$ as subsequence for
some non-constant $p\in\QQ[t]$. Then
$(\fibergraph{A}{b_i}{\cM})_{i\in\NN}$ is not rapidly mixing.
\end{cor}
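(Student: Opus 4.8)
The plan is to deduce Corollary~\ref{c:MixingAlongRay} directly from Corollary~\ref{c:PolynomialSubsequence}. Indeed, the hypotheses of the latter require three things: a meaningful parametrization of $(b_i)_{i\in\NN}$, a polynomial $p\in\QQ[t]$ with $p(\NN)\subseteq\NN$, and that the reparametrized sequence $(b_{p(i)})_{i\in\NN}$ be dominated. Here we are handed a non-constant $p\in\QQ[t]$ and $b\in\cone{A}$ with $\dim(\relaxfiber{A}{b})>0$ such that $(p(k)\cdot b)_{k\in\NN}$ is a subsequence of $(b_i)_{i\in\NN}$. So the first step is to massage $p$ and the subsequence indices into the exact shape needed.

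First I would address integrality: $p$ is only assumed to be a non-constant rational polynomial with $(p(k)\cdot b)_{k\in\NN}$ a subsequence, so $p$ already takes values giving valid indices, but to apply Corollary~\ref{c:PolynomialSubsequence} we want a polynomial (call it $\tilde p$) with $\tilde p(\NN)\subseteq\NN$ realizing the index selection. Since $(p(k)\cdot b)_{k\in\NN}$ is a subsequence of $(b_i)_{i\in\NN}$, there is a strictly increasing index sequence $(i_k)_{k\in\NN}$ in $\NN$ with $b_{i_k}=p(k)\cdot b$ for all $k$. The point is that $i_k$ need not itself be polynomial in $k$, so I would instead pass to the further subsequence given by $\tilde p(k):=i_{q(k)}$ only if that is polynomial — which is not guaranteed. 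The cleaner route: observe that we do not actually need the selecting map to be polynomial in general; we need a polynomial $\tilde p$ such that $(b_{\tilde p(i)})_{i\in\NN}$ is dominated. Take $\tilde p$ to be a polynomial with $\tilde p(\NN)\subseteq\NN$ and $\tilde p(i)=i_{\phi(i)}$ impossible in general — so here is the real idea: restrict attention to the subsequence $(b_{i_k})_{k\in\NN}$ itself, which equals $(p(k)\cdot b)_{k\in\NN}$, and show this is dominated as a function of $k$, then invoke Theorem~\ref{t:NotRapidlyMixing} on it after checking it has a meaningful parametrization.

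So the streamlined argument is: the sequence $(c_k)_{k\in\NN}:=(p(k)\cdot b)_{k\in\NN}$ is a subsequence of $(b_i)_{i\in\NN}$; it suffices to show $(\fibergraph{A}{c_k}{\cM})_{k\in\NN}$ is not rapidly mixing. Since $p$ is non-constant, after discarding finitely many terms $p$ is strictly monotone, so by Proposition~\ref{p:MeaningfulParametrization} — using that $\|p(k)\cdot b\|$ grows polynomially in $k$ — the sequence $(c_k)_{k\in\NN}$ has a meaningful parametrization. Next, writing $p(k)=a_r k^r+\dots+a_0$ with $a_r>0$ and $r=\deg p\ge 1$, for $k$ large we have $p(k)\ge k\cdot b$-scaling in the sense that $c_k-k\cdot b'\in\cone{A}$ where I set $b':=b$: indeed $c_k-k\cdot b=(p(k)-k)\cdot b\in\cone{A}$ once $p(k)\ge k$, which holds for all large $k$ since $r\ge 1$ and $a_r>0$. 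Then the domination witnesses are immediate: take $u\in\fiber{A}{b}$ to be any element with $\supp(u)$ as large as needed — actually we just need some $u\in\fiber{A}{b}$ and $w_k\in\fiber{A}{(p(k)-k)b}=\fiber{A}{c_k-kb}$ with $\supp(w_k)\subseteq\supp(u)$; since $(p(k)-k)\cdot b$ is a nonnegative integer multiple of $b$, namely $(p(k)-k)\cdot b$, and $\fiber{A}{jb}\supseteq\{j\cdot u\}$ for $u\in\fiber{A}{b}$, we may pick $w_k:=(p(k)-k)\cdot u$, whose support is contained in $\supp(u)$. Finally $\dim(\relaxfiber{A}{b})>0$ by hypothesis, so $(c_k)_{k\in\NN}$ is dominated. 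After a harmless linear reindexing (Corollary~\ref{c:PolynomialSubsequence}'s proof handles exactly such reparametrizations, or one applies Theorem~\ref{t:NotRapidlyMixing} directly), we conclude $(\fibergraph{A}{c_k}{\cM})_{k\in\NN}$ is not rapidly mixing, hence neither is the larger sequence $(\fibergraph{A}{b_i}{\cM})_{i\in\NN}$.

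The main obstacle, and the only place requiring care, is the bookkeeping around passing between a \emph{subsequence} of $(b_i)_{i\in\NN}$ and a genuine polynomial reparametrization as demanded by Corollary~\ref{c:PolynomialSubsequence}: one must not conflate "the values $p(k)\cdot b$ appear among the $b_i$" with "the index map $k\mapsto$ (position of $p(k)\cdot b$) is polynomial." The fix is to never touch the ambient indices $i$ at all — work intrinsically with the subsequence $(p(k)\cdot b)_{k\in\NN}$, verify domination and meaningful parametrization for it directly in the variable $k$, apply Theorem~\ref{t:NotRapidlyMixing}, and then use the trivial fact that a sequence containing a non–rapidly-mixing subsequence is itself not rapidly mixing. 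Everything else (monotonicity of $p$ for large $k$, polynomial growth of $\|p(k)b\|$, exhibiting the domination data via scalar multiples of a fixed $u\in\fiber{A}{b}$) is routine.
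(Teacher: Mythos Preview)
Your overall strategy coincides with the paper's: work directly with the subsequence $(p(k)\cdot b)_{k\in\NN}$, verify that it is dominated by $b$ and meaningfully parametrized (via Proposition~\ref{p:MeaningfulParametrization}), and invoke Theorem~\ref{t:NotRapidlyMixing}. The paper's proof does precisely this in three lines, including the explicit domination witnesses $w_k=(p(k)-k)\cdot u$ that you also identify.

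There is, however, one genuine slip. You assert that $p(k)\ge k$ for all large $k$ ``since $r\ge 1$ and $a_r>0$''. This fails when $r=1$ and $0<a_1<1$: for example $p(t)=\tfrac12 t$ never satisfies $p(k)\ge k$ for $k\ge 1$, yet such a $p$ is allowed by the hypotheses. The paper anticipates exactly this case by first choosing $C\in\NN$ with $p(C\cdot k)\ge k$ for all large $k$ --- which is always possible for a non-constant $p$ with positive leading coefficient --- and then running the argument with the further subsequence $(p(Ck)\cdot b)_{k}$. Your closing remark about a ``harmless linear reindexing'' points in the right direction but is attached to a different issue (passing from ``large $k$'' to ``all $k$'') and does not actually repair the degree-one case. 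Once you insert this linear dilation of the parameter, your proof and the paper's are identical.
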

\begin{proof}
Let $C,N\in\NN$ such that $p(C\cdot k)\ge k$ for $k\ge N$.  Then
$(p(C\cdot k)-k)\cdot b\in\cone{A}$ for $k\ge N$ and hence $(p(C\cdot
k)\cdot b)_{k\in\NN_{\ge N}}$ is dominated. Clearly, $(p(C\cdot
k)\cdot b)_{k\in\NN_{\ge N}}$ is meaningfully parametrized because of
Proposition~\ref{p:MeaningfulParametrization} and hence the statement
is a consequence of Theorem~\ref{t:NotRapidlyMixing}.
\end{proof}

\begin{remark}\label{r:AddingMultiples}
Let $\cM=\{m_1,\ldots,m_r\}\subset\ker_\ZZ(A)$ be a Markov basis for
$A$. Extending $\cM$ by adding a finite number of $\ZZ$-linear
combinations $\sum_{i=1}^k\lambda_i m_i$ may improve the mixing
behaviour in one particular fiber, but since the complexity of the new
set of moves is still finite, this cannot lead to rapid mixing
asymptotically due to Theorem~\ref{t:NotRapidlyMixing}. For instance,
this implies that the Graver basis of $A$ has the same asymptotic
mixing behaviour than any other finite Markov basis for $A$.
\end{remark}

\begin{example}\label{ex:IndepModNoExpander}
Let $A_{n|m}$ be the constraint matrix of the $n\times m$-independence
model~\cite{Drton2008}. Elements in the kernel of $A_{n|m}$ can be written as $n\times
m$ contingency tables whose row and column sums are zero and the
\emph{basic moves} $\cM_{n|m}$ are a
minimal Markov basis for $A_{n|m}$. These are all moves in the orbit of
\begin{equation*}
\begin{bmatrix}
1 & -1 & 0 & \cdots & 0\\
-1 & 1 & 0 & \cdots & 0\\
0 & 0 & 0 & \\
\vdots & \vdots &  &\ddots & \vdots \\
0 & 0 & & \cdots & 0
\end{bmatrix}\in\ZZ^{n\times m}
\end{equation*}
under the group action of $S_n\times S_m$ on the rows and columns.
Using this Markov basis to explore the set of contingency tables was
suggested in \cite{Diaconis1998a}. Elements in the same fiber of
$A_{n|m}$ have the same $\|\cdot\|_1$-norm, namely
$\frac{1}{2}\|b\|_1$, since the row-space of $A_{n|m}$ contains the
vector $(1,\dots,1)\in\RR^{n\cdot m}$. Observe that the invariant
$\frac{1}{2}\|b\|_1$ is precisely the \emph{sample size} of
goodness-of-fit tests for the independence
model~\cite[Chapter~1.1]{Drton2008}.
Thus, we obtain sequences $(b_i)_{i\in\NN}$ with a
meaningful parametrization whenever the sample size grows polynomial
in $i$ by Proposition~\ref{p:MeaningfulParametrization}. Assume that
$n\ge m$ and that $b_i\ge\frac{s}{t}\cdot i\cdot
(1,\dots,1)^T$ for fixed $s,t\in\NN$, then $b_{i\cdot t\cdot n}-i\cdot
s\cdot (n,\dots,n,m,\dots,m)^T\in\cone{A_{n|m}}$ (where
$n,\dots,n$ denotes the $m$ column sums and $m,\dots,m$ denotes the
$n$ row sums) and it
follows that
$(b_{i\cdot t\cdot n})_{i\in\NN}$ is dominated since the fiber of $(n,\dots,n,m\dots,m)^T$
contains an element with full support.
Corollary~\ref{c:PolynomialSubsequence} shows that the simple fiber
walk on $(\fibergraph{A_{n|m}}{b_i}{\cM_{n|m}})_{i\in\NN}$ is
not rapidly mixing. These assumptions hold for instance when $n=m$ and
$b_i:=(i,\dots,i)\in\NN^{2n}$, even though the node-connectivity
under the basic moves $\cM_{n|n}$ is best-possible due
to~\cite[Theorem~2.9]{Potka2013}.
\end{example}

Using Markov chain comparison methods as in~\cite{Dyer2006},
Theorem~\ref{t:NotRapidlyMixing} can be used to show that related
random walks on fibers are not rapidly mixing as well.
We show that the random walk on
a fiber where we sample uniformly from the full Markov basis has
asymptotically the same mixing behaviour than the random walk which
samples from the set of \emph{applicable} moves locally.

\begin{prop}\label{p:OnlyApplicableMoves}
Let $G=(V,E)$ be a connected $d$-regular graph and let $G'$ be the
graph obtained from $G$ after removing all its loops. Let $\lambda$
and $\lambda'$ be the second largest eigenvalues of $S^G$ and $S^{G'}$
respectively, then $(1-\lambda')\le d\cdot (1-\lambda)$.
\end{prop}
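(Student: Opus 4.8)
The plan is to reduce the inequality to a comparison between two Laplacian spectral gaps of the loopless graph $G'$, exploiting the fact that, because $G$ is $d$-regular, the number of loops at a vertex is determined by its degree in $G'$.

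First I would record the algebraic setup. Assuming $G$ has at least two vertices, no loop of $G$ can be a bridge, so $G'$ is again connected and every vertex of $G'$ has at least one non-loop neighbour; write $D':=\mathrm{diag}(\deg_{G'}(v_1),\dots,\deg_{G'}(v_n))$, which is therefore invertible, and $L':=D'-A^{G'}$ for the combinatorial Laplacian of $G'$. Since $G$ is $d$-regular, the number of loops at $v_i$ equals $d-\deg_{G'}(v_i)$, so $A^G=A^{G'}+dI-D'$ and hence
\[
S^G=\tfrac1d A^G=I-\tfrac1d L',\qquad S^{G'}=(D')^{-1}A^{G'}=I-(D')^{-1}L'.
\]
The matrix $S^G$ is symmetric, so $1-\lambda=\tfrac1d\mu_2$ where $\mu_2$ is the second smallest eigenvalue of $L'$; and $S^{G'}$ is conjugate via $(D')^{1/2}$ to the symmetric matrix $I-\mathcal L'$ with $\mathcal L':=(D')^{-1/2}L'(D')^{-1/2}$, so $1-\lambda'=\nu_2$ where $\nu_2$ is the second smallest eigenvalue of $\mathcal L'$. (Connectedness of $G'$ makes $0$ a simple eigenvalue of $L'$ and of $\mathcal L'$.) Thus the claim $1-\lambda'\le d(1-\lambda)$ is equivalent to $\nu_2\le\mu_2$.

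Next I would prove $\nu_2\le\mu_2$ by Courant--Fischer. Here $\mu_2=\min\{f^\top L'f/f^\top f:\ f\neq\0,\ \sum_i f_i=0\}$ and $\nu_2=\min\{f^\top L'f/f^\top D'f:\ f\neq\0,\ \sum_i\deg_{G'}(v_i)f_i=0\}$. Take $f$ attaining $\mu_2$ and recentre it with respect to the degree weights, $h:=f-c\1$ with $c:=\big(\sum_i\deg_{G'}(v_i)f_i\big)\big/\big(\sum_i\deg_{G'}(v_i)\big)$, so that $h$ (which is nonzero, since $\sum_i f_i=0$ would otherwise force $f=\0$) satisfies the constraint of the $\nu_2$-problem. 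Because $\1\in\ker L'$ we get $h^\top L'h=f^\top L'f$, and because $\deg_{G'}(v_i)\ge 1$ together with $\sum_i f_i=0$ we get $h^\top D'h=\sum_i\deg_{G'}(v_i)h_i^2\ge\sum_i h_i^2=\sum_i f_i^2+nc^2\ge f^\top f$. Hence $\nu_2\le h^\top L'h/h^\top D'h\le f^\top L'f/f^\top f=\mu_2$, and therefore $1-\lambda'=\nu_2\le\mu_2=d(1-\lambda)$.

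The one genuine obstacle is that $S^G$ and $S^{G'}$ are stationary with respect to different measures — uniform versus degree-proportional — so a naive Dirichlet-form comparison stumbles because the two variances live in different inner products. The degree-weighted recentring $h=f-c\1$ is precisely the device that exchanges the orthogonality constraint $\sum_i f_i=0$ for $\sum_i\deg_{G'}(v_i)f_i=0$ without enlarging the Rayleigh quotient; once this is set up, the rest is the identity $S^G=I-\tfrac1d L'$ and elementary bookkeeping. (I would also remark that $1-\lambda=\mu_2/d\to 0$ along the sequences of Theorem~\ref{t:NotRapidlyMixing} forces $1-\lambda'\to 0$, which is what is needed to transfer the non-rapid-mixing conclusion to the walk on applicable moves.)
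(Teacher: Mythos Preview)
Your argument is correct and takes a genuinely different route from the paper's. The paper treats the bipartite case of $G'$ separately and then invokes an external Markov-chain comparison lemma of Ullrich: comparing the stationary measures (uniform for $S^G$, degree-proportional for $S^{G'}$) and the off-diagonal transition probabilities yields $(1-\lambda')\le d\cdot\delta^{-1}(1-\lambda)$ with $\delta$ the minimum degree in $G'$, and connectedness of $G$ gives $\delta\ge 1$. Your approach is self-contained and structurally sharper: the identity $S^G=I-\tfrac1dL'$ makes explicit that both walks are governed by the \emph{same} combinatorial Laplacian $L'$ of the loopless graph, only with different normalizations, so the whole statement collapses to the inequality $\nu_2(\mathcal L')\le\mu_2(L')$ between the normalized and unnormalized algebraic connectivities, which you prove by Courant--Fischer with the degree-weighted recentring. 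This avoids the external reference, handles the bipartite case uniformly, and exposes exactly where the $d$-regularity is used (namely in $A^G=A^{G'}+dI-D'$). The paper's approach, by contrast, situates the result inside a general comparison framework for reversible chains that does not depend on this special structure and would transfer to other modifications of the walk.
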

\begin{proof}
Let $m$ be the number of edges and $\delta$ be the minimal degree in $G'$
respectively. If
$G'$ is bipartite, then $\lambda'=1$ and the claim
holds. Assume differently, the
stationary distribution $\pi$ of $S^{G}$ is the uniform distribution on
$V$ whereas the stationary distribution of $S^{G'}$ is
$\pi':V\to[0,1]$, $\pi'(u)=\deg_{G'}(u)\cdot(2m)^{-1}$. We use~\cite[Lemma~2.5]{Ullrich2012}. For any
$u\in V$,
\begin{equation*}
\frac{\pi'(u)}{\pi(u)}=\frac{|V|\cdot\deg_{G'}(u)}{2m}\ge\frac{|V|\cdot\delta}{2m}
\end{equation*}
and for any distinct $w,v\in V$,
\begin{equation*}
\frac{\pi'(w)\cdot S^{G'}_{w,v}}{\pi(w)\cdot
S^G_{w,v}}=\frac{|V|\cdot d}{2m}.
\end{equation*}
Since the diagonal entries of $S^{G'}$ are zero,
\cite[Lemma~2.5]{Ullrich2012} implies 
$(1-\lambda')\le d\cdot\delta^{-1}\cdot(1-\lambda)$. Since
$G$ is connected, $G'$ has no isolated nodes and hence $\delta^{-1}\le 1$.
\end{proof}

\begin{remark}\label{r:OnlyApplicableMoves}
Fix a constraint matrix $A\in\ZZ^{m\times d}$ and Markov basis $\cM$ of
$A$ and consider the random walk $S'$ on $\fibergraph{A}{b}{\cM}$
which samples for any $v\in\fiber{A}{b}$ uniformly from the set of all applicable
moves $\{m\in\pm\cM: v+m\in\NN^d\}$ to explore the fiber. This
modified random walk is precisely the simple walk on the graph
obtained from $\fibergraph{A}{b}{\cM}$ after removing all its loops.
In particular, this random walk has no rejections.  However,
Proposition~\ref{p:OnlyApplicableMoves} implies that whenever
$(\fibergraph{A}{b_i}{\cM})_{i\in\NN}$ is not rapidly mixing, this
modified random walk is not rapidly mixing as well.
\end{remark}

\section{Constructing expander graphs on fibers}\label{s:ConstructExpanders}

The message from the previous section is that the moves in a Markov
bases do not suffice to provide a good mixing behaviour
asymptotically.  A possible way out is to adapt the Markov basis
appropriately so that its complexity grows with the size of the
right-hand entries.  This
can be achieved by adding a varying number of $\ZZ$-linear
combinations of the moves in a way that the edge-expansion of the
resulting graph can be controlled. However, a growth of the set of
allowed moves comes along with an increase of the number of loops,
i.e. an increase of the rejection rate of the walk. Let
$A\in\ZZ^{m\times d}$ be a matrix,
$\cM=\{m_1,\ldots,m_k\}\subset\ker_\ZZ(A)$ be a Markov basis for $A$,
and $b\in\cone{A}$. For $l\in\NN$, let
\begin{equation*}
\cM(l)=\left\{\sum_{j=1}^k\lambda_jm_j:\lambda_1,\ldots,\lambda_k\in\ZZ, \sum_{j=1}^k|\lambda_j|\le
l\right\}
\end{equation*}
and define $\fdiam{\cM}{A}{b}:=\diam{\fibergraph{A}{b}{\cM}}$ and
$\cM^b:=\cM(\fdiam{\cM}{A}{b})$.  Using $\cM^b$ instead of $\cM$ as
a set of allowed moves, the corresponding fiber graph
$\fibergraph{A}{b}{\cM^b}$ is the complete graph on~$\fiber{A}{b}$.
We discuss in Remark~\ref{r:SamplingFromPowerMoves} how moves from
$\cM^b$ can be sampled uniformly. The transition matrix of the simple
walk on $\fibergraph{A}{b}{\cM^b}$ is
\begin{equation*}
\frac{1}{|\cM^b|}\begin{bmatrix}
1 & 1 & \dots & 1 & 1 \\
1 & \ddots & &  & 1 \\
\vdots & & & &  \vdots\\
1 & & &  \ddots & 1 \\
1 & 1 & \dots & 1 & 1 \\
\end{bmatrix}+\frac{1}{|\cM^b|}
\begin{bmatrix}
|\cM^b|-|\fiber{A}{b}| & 0 & \dots & 0 & 0 \\
0 & \ddots & &  & 0 \\
\vdots & & & &  \vdots\\
0 & & &  \ddots & 0 \\
0 & 0 & \dots & 0 & |\cM^b|-|\fiber{A}{b}| \\
\end{bmatrix}.
\end{equation*}
In particular, its second largest eigenvalue modulus is
$1-\frac{|\fiber{A}{b}|}{|\cM^b|}$ and hence the next proposition is
immediate.

\begin{prop}\label{p:FiberExpanders}
Let $A\in\ZZ^{m\times d}$, $\cM\subset\ker_\ZZ(A)$ be a Markov
basis for $A$ and $(b_i)_{i\in\NN}$ a sequence in $\cone{A}$. Suppose
there exists $r\in\NN$
such that $(|\fiber{A}{b_i}|)_{i\in\NN}\in\Omega(i^r)_{i\in\NN}$ and
$(|\cM^{b_i}|)_{i\in\NN}\in\mathcal{O}(i^r)_{i\in\NN}$,
then $(\fibergraph{A}{b_i}{\cM^{b_i}})_{i\in\NN}$ is an
expander.
\end{prop}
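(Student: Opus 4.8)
The plan is to read the second largest eigenvalue modulus of $\fibergraph{A}{b_i}{\cM^{b_i}}$ off the explicit transition matrix displayed just before the statement, and then to feed the two growth hypotheses into the resulting closed formula.

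First I would note that $\cM^{b_i}=\cM(\fdiam{\cM}{A}{b_i})$ contains $\cM$, hence is a Markov basis for $\fiber{A}{b_i}$, so that $\fibergraph{A}{b_i}{\cM^{b_i}}$ is connected; indeed it is the complete graph on $\fiber{A}{b_i}$ with loops added to make it $|\cM^{b_i}|$-regular, as explained in the text. The transition matrix of the simple walk on it is $\frac{1}{|\cM^{b_i}|}\bigl(J+(|\cM^{b_i}|-|\fiber{A}{b_i}|)\,\mathrm{Id}\bigr)$ with $J$ the all-ones matrix of size $|\fiber{A}{b_i}|$; since $J$ has eigenvalue $|\fiber{A}{b_i}|$ once (on the all-ones vector) and $0$ with multiplicity $|\fiber{A}{b_i}|-1$, the spectrum of the transition matrix is $\{1\}\cup\{1-\frac{|\fiber{A}{b_i}|}{|\cM^{b_i}|}\}$, and (using $|\fiber{A}{b_i}|\le|\cM^{b_i}|$) its second largest eigenvalue modulus equals $\lambda_i=1-\frac{|\fiber{A}{b_i}|}{|\cM^{b_i}|}$. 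This computation is already carried out in the paragraph preceding the proposition, so I would just cite it.

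Next I would invoke the hypotheses. From $(|\fiber{A}{b_i}|)_{i\in\NN}\in\Omega(i^r)_{i\in\NN}$ there are $c\in\QQ_{>0}$ and $i_0\in\NN$ with $|\fiber{A}{b_i}|\ge c\cdot i^r$ for $i\ge i_0$, and from $(|\cM^{b_i}|)_{i\in\NN}\in\mathcal{O}(i^r)_{i\in\NN}$ there is $C\in\QQ_{>0}$ (after possibly enlarging $i_0$) with $|\cM^{b_i}|\le C\cdot i^r$ for $i\ge i_0$. Dividing, $\frac{|\fiber{A}{b_i}|}{|\cM^{b_i}|}\ge\frac{c}{C}$ for all $i\ge i_0$, so $\lambda_i\le 1-\frac{c}{C}$ there. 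For each of the finitely many remaining indices $i<i_0$ the fiber $\fiber{A}{b_i}$ is non-empty (because $b_i\in\cone{A}$), hence $\lambda_i=1-\frac{|\fiber{A}{b_i}|}{|\cM^{b_i}|}<1$; alternatively, connectedness of $\fibergraph{A}{b_i}{\cM^{b_i}}$ together with Lemma~\ref{l:ErgodicSimpleFiberWalks} gives $\lambda_i<1$. Putting $\epsilon:=\min\bigl(\frac{c}{C},\min_{i<i_0}(1-\lambda_i)\bigr)>0$ we get $\lambda_i\le 1-\epsilon$ for every $i\in\NN$, which is precisely the condition in Definition~\ref{d:AsymptoticMixing} for $(\fibergraph{A}{b_i}{\cM^{b_i}})_{i\in\NN}$ to be an expander.

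There is really no serious obstacle here: the content is entirely in the eigenvalue formula, which has already been established, and matching the exponent $r$ in the $\Omega$-bound and the $\mathcal{O}$-bound is exactly what keeps the ratio $|\fiber{A}{b_i}|/|\cM^{b_i}|$ bounded away from $0$. The only points that require a word of care are verifying that $\cM^{b_i}$ is genuinely a Markov basis (so that the simple walk is well defined and the eigenvalue formula applies) and absorbing the finitely many small indices into $\epsilon$, so that the bound $\lambda_i\le 1-\epsilon$ holds for all $i\in\NN$ rather than only asymptotically.
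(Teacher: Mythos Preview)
Your proof is correct and follows exactly the approach the paper takes: the paper computes the second largest eigenvalue modulus to be $1-\frac{|\fiber{A}{b}|}{|\cM^b|}$ in the paragraph before the proposition and then simply declares the result ``immediate.'' You have merely written out the details of that immediacy (the $\Omega/\mathcal{O}$ comparison and the handling of the finitely many small indices), so there is nothing to add.
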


To make use of Proposition~\ref{p:FiberExpanders}, the growths of the
fibers and the adapted Markov bases have to be compared. Again,
Ehrhart's theory applies to compute the growth of certain fiber
sequences. The asymptotic growth of~$\cM^{b_i}$ depends on the growth
of the diameter of $\fibergraph{A}{b_i}{\cM}$. Hence, we first want to
understand how the number of elements in $\cM(l)$ grows as a function
of $l\in\NN$.

\begin{lemma}\label{l:GrowthPowerSet}
Let $\cM=\{m_1,\ldots,m_k\}\subset\ZZ^d$, then 
$(|\cM(l)|)_{l\in\NN}\in\mathcal{O}(l^{\rank(\cM)})_{l\in\NN}$.
\end{lemma}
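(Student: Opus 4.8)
The plan is to show that $\cM(l)$ is contained in the dilate of a fixed polytope and then invoke the kind of Ehrhart/lattice-point counting already used in the paper. Let $L$ be the lattice spanned by $m_1,\dots,m_k$ inside $\ZZ^d$, so $\rank(L)=\rank(\cM)=:\rho$. Every element of $\cM(l)$ lies in $L$, and moreover $\cM(l)\subseteq l\cdot\conv\{\pm m_1,\dots,\pm m_k\}=:lP$ by the triangle inequality: if $v=\sum_j\lambda_j m_j$ with $\sum_j|\lambda_j|\le l$, then $v/l$ is a convex combination of the points $\mathrm{sign}(\lambda_j)\,m_j$ and the origin, hence $v/l\in P$. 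So $|\cM(l)|\le |lP\cap L|$.

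Next I would observe that $P$ is a polytope of dimension exactly $\rho$: it lies in the $\rho$-dimensional subspace $L\otimes\QQ$ spanned by the $m_j$, and it spans that subspace because it contains $\pm m_1,\dots,\pm m_k$. Choosing a $\ZZ$-basis of $L$ identifies $L\otimes\QQ$ with $\QQ^\rho$ and $L$ with $\ZZ^\rho$, carrying $P$ to a (rational) polytope $P'\subseteq\QQ^\rho$ of full dimension $\rho$. Then $|lP\cap L|=|lP'\cap\ZZ^\rho|=L_{P'}(l)$, which by Ehrhart's theorem (cited in the paper as \cite[Theorem~3.23]{Beck2007}) is a quasi-polynomial in $l$ of degree $\dim(P')=\rho$. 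A quasi-polynomial of degree $\rho$ is in $\mathcal{O}(l^\rho)_{l\in\NN}$, so $|\cM(l)|\le L_{P'}(l)\in\mathcal{O}(l^{\rank(\cM)})_{l\in\NN}$, as claimed.

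There is a minor subtlety I would address: $P$ and hence $P'$ may have non-integral vertices, so one should use the quasi-polynomial version of Ehrhart's theorem for rational polytopes rather than the integral one — but the paper already works with quasi-polynomials (e.g. in Lemma~\ref{l:HyperplanesAndPolytopes} and Lemma~\ref{l:GrowthOfQuasiPoly}), and for an upper bound of the form $\mathcal{O}(l^\rho)$ one could even avoid this entirely by enlarging $P'$ to an integral polytope containing it. A completely elementary alternative, bypassing Ehrhart altogether, is to note that $P'$ is contained in a box $[-R,R]^\rho$ for some $R$, so $|lP'\cap\ZZ^\rho|\le (2Rl+1)^\rho\in\mathcal{O}(l^\rho)$; since only an $\mathcal{O}$-bound is needed, this suffices and I would probably present it this way for brevity. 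The only real point requiring care is the identification $\dim P=\rank(\cM)$ together with the change of coordinates to $\ZZ^\rho$, and this is routine linear algebra; I do not expect a genuine obstacle.
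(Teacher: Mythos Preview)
Your proposal is correct and follows essentially the same route as the paper: both contain $\cM(l)$ in the $l$-th dilate of the polytope $\conv\{\pm m_1,\dots,\pm m_k\}$ (which is exactly the paper's $\cP'$, the image of the cross-polytope under $x\mapsto\cM x$) and then invoke Ehrhart's theorem to get a degree-$\rank(\cM)$ bound. The only cosmetic difference is that the paper counts $\ZZ^d$-points in $l\cP'$ directly rather than passing to the sublattice $L$ and changing coordinates; your box alternative and the rationality discussion are extra care the paper omits.
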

\begin{proof}
We identify the finite set $\cM$ with the integer matrix
$(m_1,\ldots,m_k)\in\ZZ^{d\times k}$. Denote the $k$-dimensional
cross-polytope by $\cP:=\{x\in\QQ^k: \|x\|_1\le
1\}$ and let $\cP':=\{\cM\cdot x:
x\in\cP\}$ be its image in $\QQ^d$ under $\cM$. 
With this, we can write $\cM(l)=\{\cM\cdot x: x\in
(l\cdot\cP)\cap\ZZ^k\}$ and hence
$\cM(l)\subseteq(l\cdot\cP')\cap\ZZ^d$. Since $\cP'$ is a polytope, Ehrhart's theorem
\cite[Theorem~3.23]{Beck2007}
gives $|(l\cdot\cP')\cap\ZZ^d|\le C\cdot l^{\dim(\cP')}$ for some
$C\in\QQ_{>0}$ and since $\dim(\cP')=\rank(\cM)$, the claim
follows. 
\end{proof}

\begin{cor}\label{c:LinearDiameter}
Let $A\in\ZZ^{m\times d}$ and let $\cM\subset\ker_\ZZ(A)$ be a Markov
basis for $A$. Let $(b_i)_{i\in\NN}$ be a sequence in $\cone{A}$ such
that $(|\fiber{A}{b_i}|)_{i\in\NN}\in\Omega(i^{d-\rank(A)})$
and $(\fdiam{\cM}{A}{b_i})_{i\in\NN}\in\mathcal{O}(i)_{i\in\NN}$. Then
$(\fibergraph{A}{b_i}{\cM^{b_i}})_{i\in\NN}$ is an
expander. 
\end{cor}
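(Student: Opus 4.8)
The plan is to deduce Corollary~\ref{c:LinearDiameter} directly from Proposition~\ref{p:FiberExpanders} together with Lemma~\ref{l:GrowthPowerSet}, by checking that the two hypotheses of the proposition are met with the exponent $r:=d-\rank(A)$. The first hypothesis, $(|\fiber{A}{b_i}|)_{i\in\NN}\in\Omega(i^{d-\rank(A)})$, is assumed verbatim, so there is nothing to do there. Everything therefore comes down to verifying the upper bound $(|\cM^{b_i}|)_{i\in\NN}\in\mathcal{O}(i^{d-\rank(A)})_{i\in\NN}$.

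For the upper bound I would argue as follows. By definition $\cM^{b_i}=\cM(\fdiam{\cM}{A}{b_i})$, and by hypothesis there is a constant $C_1\in\QQ_{>0}$ and an index $i_0$ with $\fdiam{\cM}{A}{b_i}\le C_1\cdot i$ for all $i\ge i_0$. Since $\cM(l)\subseteq\cM(l')$ whenever $l\le l'$, this gives $|\cM^{b_i}|=|\cM(\fdiam{\cM}{A}{b_i})|\le|\cM(\lceil C_1 i\rceil)|$ for $i\ge i_0$. By Lemma~\ref{l:GrowthPowerSet} there is a constant $C_2\in\QQ_{>0}$ with $|\cM(l)|\le C_2\cdot l^{\rank(\cM)}$ for all $l$, whence $|\cM^{b_i}|\le C_2\cdot\lceil C_1 i\rceil^{\rank(\cM)}\le C_3\cdot i^{\rank(\cM)}$ for a suitable constant $C_3$ and all large $i$. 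Finally I would note $\rank(\cM)\le d-\rank(A)$: since $\cM\subseteq\ker_\ZZ(A)$ and $\ker_\ZZ(A)$ has rank $d-\rank(A)$, the span of $\cM$ is a subspace of $\ker_\RR(A)$, so $\rank(\cM)\le\dim\ker_\RR(A)=d-\rank(A)$. Because $i\ge 1$ eventually and the $i^s$ are increasing in $s$, this yields $|\cM^{b_i}|\le C_3\cdot i^{d-\rank(A)}$ for all large $i$, i.e. $(|\cM^{b_i}|)_{i\in\NN}\in\mathcal{O}(i^{d-\rank(A)})_{i\in\NN}$.

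With both hypotheses of Proposition~\ref{p:FiberExpanders} verified for the exponent $r=d-\rank(A)$, the proposition immediately gives that $(\fibergraph{A}{b_i}{\cM^{b_i}})_{i\in\NN}$ is an expander, which is the claim. I do not expect any genuine obstacle here: the corollary is essentially a bookkeeping combination of the two cited results, and the only mildly non-obvious point is the rank inequality $\rank(\cM)\le d-\rank(A)$, which follows at once from $\cM\subseteq\ker_\ZZ(A)$. One small care point is that $\Omega$ and $\mathcal{O}$ are only asymptotic, so all the inequalities above should be read as holding for $i$ sufficiently large, which is exactly the form in which Proposition~\ref{p:FiberExpanders} is stated and used.
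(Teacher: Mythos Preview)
Your proposal is correct and follows essentially the same route as the paper: invoke Lemma~\ref{l:GrowthPowerSet} to bound $|\cM(l)|$ polynomially in $l$, plug in the linear diameter bound, and apply Proposition~\ref{p:FiberExpanders} with $r=d-\rank(A)$. The only cosmetic difference is that the paper uses the equality $\rank(\cM)=d-\rank(A)$ (which holds because a Markov basis for $A$ must span $\ker_\ZZ(A)$), whereas you use the weaker inequality $\rank(\cM)\le d-\rank(A)$; your version is sufficient and arguably more direct.
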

\begin{proof}
Let $r:=\dim(\ker_\ZZ(A))$. It suffices to show that $|\cM^{b_i}|\le
C\cdot i^r$ for a constant $C\in\QQ_{\ge 0}$ since the statement
follows then from Proposition~\ref{p:FiberExpanders}. Since $\cM$ is a
Markov basis for $A$, $\rank(\cM)=r$ and thus
Lemma~\ref{l:GrowthPowerSet} implies that $|\cM(l)|\le C_1\cdot l^r$
for a constant $C_1\in\QQ_{\ge 0}$. The assumption implies that there
exists $C_2\in\QQ_{\ge 0}$ such that $\fdiam{\cM}{A}{b_i}\le C_2\cdot
i$ for all $i\in\NN$. Then,
$|\cM^{b_i}|=|\cM(\fdiam{\cM}{A}{b_i})|\le|\cM(C_2\cdot i)|\le
C_1\cdot C_2^r\cdot i^r$.
\end{proof}

Expanders are not per se fast, and Corollary~\ref{c:LinearDiameter} is
an asymptotic statement. That means, for a given matrix
$A\in\ZZ^{m\times d}$, a given Markov basis $\cM\subset\ker_\ZZ(A)$,
and a right-hand side $b\in\cone{A}$, we know by
Theorem~\ref{t:NotRapidlyMixing} that the second largest eigenvalue
modulus of the simple walk that uses $\cM$ can be arbitrarily close to
$1$. On the other hand, since $(\fdiam{\cM}{A}{i\cdot
b})_{i\in\NN}\in\mathcal{O}(i)_{i\in\NN}$
by~\cite{windisch2016-heatbath}, the second largest eigenvalue modulus
of the simple walk that uses the adapted Markov basis $\cM^{i\cdot b}$
can be bounded away from $1$ strictly.  Thus, there exists a threshold
$i_0\in\NN$  such that the adapted Markov basis is faster than the
conventional Markov basis on $\fiber{A}{i\cdot b}$ for $i\ge i_0$.
The exact value of $i_0$ depends on the hidden constants in the
asymptotic formulations of Corollary~\ref{c:LinearDiameter} and can be
quite small, as in Figure~\ref{f:AdaptedVsConventional}, but also very
large so that the advantages of the adapted Markov bases may pay off
only for large right-hand sides.

\begin{remark}\label{r:SamplingFromPowerMoves}
Running the simple walk on $\fibergraph{A}{b}{\cM(l)}$ for some
$l\in\NN$ requires to sample from $\cM(l)$ uniformly and hence a good
understanding of this set is necessary.  Basically, we shift the
problem of sampling from $\fiber{A}{b}$ for all $b\in\cone{A}$ where
$\fibergraph{A}{b}{\cM}$ has diameter $l$ to the problem of sampling
from $\cM(l)$, which can be seen as some kind of \emph{rejection
sampling} from a larger set
$u+\cM(l)\supseteq\fiber{A}{b}$.
For large fibers, one applicable
move $m\in\cM(l)$ suffices to obtain a sample $u+m\in\fiber{A}{b}$
that is very close to uniform. 
Write $\cM=\{m_1,\dots,m_k\}$ and $r:=\rank(\cM)$. When $r=k$, then an
element $\lambda$ picked uniformly from $\{u\in\ZZ^k: \|u\|_1\le l\}$ gives
rise to an element $\cM\cdot\lambda$ that is uniformly generated from $\cM(l)$. This is
not the case when $r>k$. One approach to sample from $\cM(l)$
uniformly in this case is to first compute a lattice basis
$\cB:=\{b_1,\dots,b_r\}\subset\ZZ^d$ of $\cM\cdot\ZZ^k$
in order to get rid of relations among the moves from~$\cM$. Then, we compute for
every $i\in[k]$
coefficients $\lambda_1^i,\dots,\lambda_r^i$ such that
$m_i=\sum_{j=1}^r\lambda_j^ib_j$. For
$C:=\sum_{j=1}^r\max_{i\in[k]}|\lambda_j^i|$, we have
$\cM(l)\subseteq\cB(C\cdot l)$. Thus, after sampling coefficients $\lambda$
from $\{u\in\ZZ^r: \|u\|_1\le C\cdot l\}$ uniformly, we obtain a move
$\cB\cdot\lambda$ that is sampled uniformly from a superset of $\cM(l)$.
Since $|\cB(C\cdot l)|$ grows as
$\mathcal{O}(l^r)_{l\in\NN}$,
Proposition~\ref{p:FiberExpanders} remains valid. 
Sampling from the cross-polytope
$\{u\in\ZZ^r: \|u\|_1\le C\cdot l\}$ can be done with the heat-bath
method as studied in~\cite{windisch2016-heatbath}, which is fast for
$l\to\infty$.
\end{remark}

\begin{example}\label{ex:IndepModExpander}
The constraint matrix $A_{n|n}$ of the independence model
(Example~\ref{ex:IndepModNoExpander}) is totally unimodular and hence
$\dim(\relaxfiber{A_{n|n}}{\mathbf{1}_{n}})=\dim(\fiber{A_{n|n}}{\mathbf{1}_{n}})$
where $\mathbf{1}_n\in\ZZ^{n+n}$ is the vector with all entries equal
to~$1$. It was shown in~\cite[Proposition~2.10]{Potka2013} that the
diameter of $\fibergraph{A_{n|n}}{\cM_{n|n}}{i\cdot\mathbf{1}_n}$ is
$(n-1)i$. In particular, for fixed $n\in\NN$, the diameter grows
linearly in $i$ and hence Corollary~\ref{c:LinearDiameter} yields that
the sequence
$(\fibergraph{A_{n|n}}{i\cdot\mathbf{1}_{n}}{\cM_{n|n}^{i\cdot\mathbf{1}_{n}}})_{i\in\NN}$
is an expander.
\end{example}

\begin{example}\label{ex:ExpanderOnRunningExample}
Assume $d>2$ and consider $A_d$ and $\cM_d$ from
Example~\ref{ex:NoExpander}. It is not hard to see that the
graph-distance between any two nodes $u,v\in\fiber{A_d}{i}$ is at most
$\|u-v\|_1$. Since the maximal
$\|\cdot\|_1$-distance of two elements in $\fiber{A_d}{i}$ is $2i$,
the diameter of $\fibergraph{A_d}{i}{\cM_d}$ is $2i$. Hence,
$(\fibergraph{A_d}{i}{\cM_d^i})_{i\in\NN}$ is an expander.
\end{example}

\begin{figure}[htbp]
\begin{tikzpicture}
	\begin{axis}[
		xlabel={$i$},
		ylabel={SLEM},
		legend style={at={(0.7,0.1)},anchor=south,draw=none},
		legend entries={conventional chain, adapted chain}]
		\addplot+[black,mark options={fill=black}] table {\fPartition};
		\addplot+[gray,mark options={fill=gray}]table {\fPartitionAdapted};
	\end{axis}
\end{tikzpicture}
\caption{The SLEM of the simple walk on $\fiber{A_3}{i}$ using
moves from the conventional Markov basis $\cM_3$ and the adapted moves
$\cM_3(2i)$.}\label{f:AdaptedVsConventional}
\end{figure}

\begin{example}\label{ex:ExpanderOnHemmecke}
For $k\in\NN$, let $I_k$ be the identity
matrix in $\ZZ^{k\times k}$, $\mathbf{1}_k$ be the $k$-dimensional
vector with all entries equal to $1$ and define the matrix 
\begin{equation}\label{equ:HemmeckeMat}
H_k:=\begin{bmatrix}
I_k & I_k & \mathbf{0} & \mathbf{0} & -\mathbf{1}_k & \mathbf{0} \\
\mathbf{0} & \mathbf{0} & I_k & I_k &  \mathbf{0}  & -\mathbf{1}_k\\
\mathbf{0} & \mathbf{0} & \mathbf{0} & \mathbf{0} & 1 & 1
\end{bmatrix}\in\ZZ^{(2k+1)\times(4k+2)}.
\end{equation}
It was shown in~\cite[Theorem~2]{Hemmecke2014} that the reduced
lexicographic Gröbner basis $\cG_k$ of $H_k$ is
$\{e_i-e_{k+i}:i\in\{1,\ldots,k,2k+1,\ldots,3k\}\}$ together with the
move
\begin{equation*}
\begin{pmatrix}0,\ldots,0,1,\ldots,1,0,\ldots,0,-1,\ldots,-1,1,-1\end{pmatrix}^T.
\end{equation*}
With \cite[Section~4]{Hemmecke2014}, it is easy to
show that for any $k\in\NN$, the diameter of $\fibergraph{H_k}{i\cdot
e_{2k+1}}{\cG_k}$ is $(2k+1)i$. Thus,
$(\fibergraph{H_k}{ie_{2k+1}}{\cG_k((2k+1)i)})_{i\in\NN}$ is
is an expander.
\end{example}

\section{Scaling the dimension}

Markov bases of constraint matrices coming from statistical problems
are often parametrized and they can be stated explicitly for any
parameter. For instance, the basic moves $\cM_{n|n}$ of the
independence model\index{log-linear model!independence model}
(Example~\ref{ex:IndepModNoExpander}) form a Markov basis for
$A_{n|n}$ for every $n\in\NN$.  Thus, varying the parameter $n$
provides fiber graphs where the set of moves is adapted canonically.

\begin{remark}
Let $b_n:=(1,\dots,1)\in\NN^{2n}$, then
the elements of $\fiber{A_{n|n}}{b_n}$ can be
identified with the elements of the symmetric group $S_n$ on $[n]$. 
Finding a
set of generators such that the corresponding \emph{Cayley graph}\index{graph!Cayley
graph} on $S_n$ is an expander is an active research field in group theory,
see for instance~\cite{kassabov2007}. 
In~\cite{diaconis1981}, it was shown that the simple walk on the Cayley
graph of $S_n$ that uses the transpositions mixes rapidly in
$\frac{1}{2}n\log n$ many steps.
Inspired by shuffling a deck of $n$ cards, a random walk on~$S_n$ that
uses \emph{riffle shuffles} was studied
in~\cite{bayer1992} and shown to be rapidly mixing as well.
\end{remark}

Parametric descriptions of Markov bases can be arbitrarily complicated
in general, since by the Universality theorem~\cite{deloera2006}, any integer vector appears as a subvector
of a Markov basis element of the three-way no interaction model, when
the parameters are large enough. Different than in fixed
dimension, where the Markov basis is fixed, the size of the Markov
basis is important in the convergence analysis when the dimension
varies because the local sampling process of a move can be
computationally challenging as the
Markov basis becomes larger. The trade-off
between an easily accessible set of moves and a corresponding random
walk that has good mixing properties shows the
realms of fiber walks in practice. The next proposition illustrates
this for
$H_k$ from Example~\ref{ex:ExpanderOnHemmecke}, where the overwhelming
number of moves in its parametric Graver basis $\graver{k}$ slows the chain down
for $k\to\infty$, despite the fact that the
edge-connectivity of these fibers
is best-possible~\cite[Theorem~4]{Hemmecke2014}. A description
of $\graver{k}$ is in~\cite[Theorem~2]{Hemmecke2014}.

\begin{prop}\label{p:ScalingDimensionOfHemmeckeWindisch}
The sequence $(\fibergraph{H_k}{e_{2k+1}}{\graver{k}})_{k\in\NN}$ is
not rapidly mixing.
\end{prop}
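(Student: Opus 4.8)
The plan is to prove the quantitative statement that the second largest eigenvalue modulus $\lambda_k$ of the simple walk on $\fibergraph{H_k}{e_{2k+1}}{\graver{k}}$ satisfies $1-\lambda_k\le 2^{-k}$ for all large $k$; since $\log\abs{\fiber{H_k}{e_{2k+1}}}$ will turn out to grow only linearly in $k$, this already contradicts the rapid mixing condition of Definition~\ref{d:AsymptoticMixing}. The first step is to describe the fiber. Writing $H_kx=e_{2k+1}$ out row by row in the obvious block coordinates $(x,y,z,w,s,t)$ yields $x_i+y_i=s$ and $z_j+w_j=t$ for $i,j\in[k]$, together with $s+t=1$ from the last row. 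Over $\NN^{4k+2}$ the last equation forces $(s,t)\in\{(1,0),(0,1)\}$, and in either case the remaining equations cut out a copy of $\{0,1\}^k$. Hence $\abs{\fiber{H_k}{e_{2k+1}}}=2^{k+1}$ and $\log\abs{\fiber{H_k}{e_{2k+1}}}=(k+1)\log 2$.

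Next I would show that $\graver{k}$ is exponentially large: either quote the description of $\graver{k}$ in \cite[Theorem~2]{Hemmecke2014}, or argue directly that for every pair of sets $S,T\subseteq[k]$ the kernel vector equal to $\mathbf{1}_S$ on the $x$\nobreakspace-block, $\mathbf{1}_{[k]\minus S}$ on the $y$-block, $-\mathbf{1}_T$ on the $z$-block, $-\mathbf{1}_{[k]\minus T}$ on the $w$-block, and $(1,-1)$ on $(s,t)$ is conformally minimal (a short sign-pattern check using $s+t=0$), hence lies in $\graver{k}$; these $4^k$ vectors are pairwise distinct, so $\abs{\graver{k}}\ge 4^k$. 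Now take $S\subset\fiber{H_k}{e_{2k+1}}$ to be one of the two halves, so $2\abs{S}=\abs{\fiber{H_k}{e_{2k+1}}}$. Since two distinct nodes of a fiber graph are joined by at most one edge — the move $u-v$ realizing such an edge is unique, and edge multiplicities occur only among loops — the number of edges leaving $S$ is at most $\abs{S}\cdot(\abs{\fiber{H_k}{e_{2k+1}}}-\abs{S})=2^{2k}$, hence $h(\fibergraph{H_k}{e_{2k+1}}{\graver{k}})\le 2^{k}$. As $\graver{k}$ is a Markov basis, $\fibergraph{H_k}{e_{2k+1}}{\graver{k}}$ is connected and $\abs{\pm\graver{k}}$-regular, so Proposition~\ref{p:EdgeExpansion} gives
\begin{equation*}
\lambda_k\;\ge\;1-\frac{2\,h\big(\fibergraph{H_k}{e_{2k+1}}{\graver{k}}\big)}{\abs{\pm\graver{k}}}\;\ge\;1-\frac{2\cdot 2^{k}}{2\cdot 4^{k}}\;=\;1-2^{-k}.
\end{equation*}
If the sequence were rapidly mixing, some polynomial $p$ would satisfy $1-\lambda_k\ge p\big((k+1)\log 2\big)^{-1}$ for all $k$, which is impossible as soon as $2^{-k}<p\big((k+1)\log 2\big)^{-1}$.

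I expect the only delicate point to be making the bound $\abs{\graver{k}}\ge 4^k$ fully rigorous — either carefully verifying conformal minimality of the exhibited vectors, or reading off the exact count from \cite{Hemmecke2014} — together with the bookkeeping that the regularity entering Proposition~\ref{p:EdgeExpansion} is $\abs{\pm\graver{k}}=2\abs{\graver{k}}$ rather than $\abs{\graver{k}}$; the remaining estimates are routine. Conceptually, the driving phenomenon is that almost every move in $\graver{k}$ is inapplicable at a typical element of $\fiber{H_k}{e_{2k+1}}$, so the fiber graph is overwhelmingly made of loops; this is exactly what forces $\lambda_k\to 1$, and it is also why the loop-removal comparison of Proposition~\ref{p:OnlyApplicableMoves} gives no relief here.
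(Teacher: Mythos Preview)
Your argument is correct and, in one respect, cleaner than the paper's own proof. The paper invokes the explicit description of $\fibergraph{H_k}{e_{2k+1}}{\graver{k}}$ from \cite[Section~4]{Hemmecke2014} as a concrete graph on $\{0,1\}^{k+1}$, picks a specific half $S_k$ (the vectors with first coordinate $0$), and counts the outgoing edges exactly to obtain $h(\fibergraph{H_k}{e_{2k+1}}{\graver{k}})\le k+2^{k-1}$; it then uses the precise value $|\graver{k}|=2(4^k+4k)$. You bypass all of this structure with the observation that distinct nodes in a fiber graph are joined by at most one edge, so \emph{any} half $S$ satisfies $|E(S)|\le |S|\cdot(|\fiber{H_k}{e_{2k+1}}|-|S|)=4^k$, hence $h\le 2^k$; combined with the lower bound $|\graver{k}|\ge 4^k$ this already forces $1-\lambda_k\le 2^{-k}$.

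The trade-off is sharpness versus economy: the paper's count gives a better constant in the edge-expansion, but that precision is irrelevant for the conclusion, and it requires importing the full adjacency description from \cite{Hemmecke2014}. Your route needs only two facts---the fiber has $2^{k+1}$ elements and the Graver basis has at least $4^k$ elements---both of which you establish directly (the conformal-minimality check for the $4^k$ exhibited vectors is straightforward, exactly as you sketch). Your closing diagnosis that the phenomenon is driven by the overwhelming proportion of loops is also the right way to read both proofs.
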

\begin{proof}
According to \cite[Section 4]{Hemmecke2014},
$\fibergraph{H_k}{e_{2k+1}}{\graver{k}}$ is isomorphic to the graph on
the nodes
$\{0,1\}^{k+1}$ in which two nodes
$(i_1,\ldots,i_{k+1})$ and $(j_1,\ldots,j_{k+1})$ are adjacent if either
$i_{k+1}=j_{k+1}$ and $\|i-j\|_{\infty}=1$, or if $i_{i+1}\neq
j_{k+1}$. For any $k\in\NN_{>0}$, let $S_k:=\{(0,i,0):
i\in\{0,1\}^{k-1}\}\cup\{(0,i,1):i\in\{0,1\}^{k-1}\}$, then
$|S_k|=\frac{1}{2}|\fiber{A_k}{e_{2k+1}}|$. Counting the edges leaving
$S_k$, for any $(0,i,0)\in S_k$ there are $k$ many with endpoints in
$\{(1,i,0):i\in\{0,1\}^{k-1}\}$ and $2^{k-1}$ with endpoints in
$\{(1,i,1):i\in\{0,1\}^{k-1}\}$. The same is true for any $(0,i,1)\in S_k$.
Hence, there are $(k+2^{k-1})\cdot 2\cdot 2^{k-1}$ edges leaving~$S_k$. The
edge-expansion of $\fibergraph{A_k}{e_{2k+1}}{\graver{k}}$  is thus
bounded from above by $k+2^{k-1}$. Since
$|\graver{k}|=2\cdot(4^k+4k)$ and
$\log|\fiber{H_k}{{e_{2k+1}}}|=k+1$, the claim follows.
\end{proof}

\bibliographystyle{amsplain}
\bibliography{fiberWalks}

\end{document}